\documentclass[12pt]{article}

\usepackage{sola}
\usepackage{todonotes}

\usepackage{pgf,ytableau}
\usepackage{hyperref}
\usepackage{algorithm}
\usepackage[noend]{algpseudocode}

\usepackage[mathrm=sym,warnings-off={mathtools-colon,mathtools-overbracket}]{unicode-math}

\theoremstyle{definition}
\newtheorem{remark}{Remark}

\DeclareMathOperator{\sign}{sign}
\DeclareMathOperator{\curl}{curl}

\DeclareMathOperator{\divergence}{div}
\DeclareMathOperator{\mathspan}{span}

\DeclareMathOperator{\mdiag}{diag}

\def\nablaperp{\nabla_\perp}

\def\iip<#1,#2>{\left⟪ #1, #2 \right⟫}

\def\vc#1{{\bf #1}}

\def\bbR{{\mathbb R}}
\def\bbZ{{\mathbb Z}}
\def\bbC{{\mathbb C}}

\def\calM{{\mathcal M}}
\def\calD{{\mathcal D}}

\def\calK{{\mathcal K}}

\def\calN{{\mathcal N}}
\def\calT{{\mathcal T}}
\def\calE{{\mathcal E}}
\def\bbT{{\mathbb T}}
\def\bfe{\vc e}
\def\bfq{\vc q}

\def\bfa{\vc a}
\def\bfb{\vc b}
\def\bfx{\vc x}

\def\bfy{\vc y}
\def\bfz{\vc z}
\def\bfP{\vc P}
\def\bfQ{\vc Q}
\def\rmy{{\mathrm y}}
\def\rmz{{\mathrm z}}

\def\rmw{{\mathrm w}}

\def\rmZ{{\mathrm Z}}
\def\rmY{{\mathrm Y}}

\def\bfp{\vc p}
\def\bff{\vc f}
\def\bfg{\vc g}
\def\bfc{\vc c}
\def\bfv{\vc v}
\def\bfn{\vc n}
\def\bft{\vc t}

\def\calR{{\mathcal R}}
\def\calP{{\mathcal P}}

\def\calB{{\mathcal B}}

\def\D{\hbox{d}}

\def\ipF<#1>{\ip<#1>_{\mathrm F}}

\def\I{\mathrm{i}}

\def\Ho{\mathring{H}}

\def\Hocurl{\mathring{H}_{\mathrm{curl}}}
\def\Hodiv{\mathring{H}_{\mathrm{div}}}
\def\Hcurl{H_{\mathrm{curl}}}

\def\Hdiv{H_{\mathrm{div}}}

\def\weirdrat{𝔯_n^{(b)}}
\def\hexnumber#1{\ifcase#1 0\or1\or2\or3\or4\or5\or6\or7\or8\or9\or
A\or B\or C\or D\or E\or F\fi}
\edef\msbhx{\hexnumber\symAMSb}   
\mathchardef\emptyset="0\msbhx3F

\begin{document}

\setupproofs
\setupenvironments



\refmodetrue

\authord={Sheehan Olver\footnote{Department of Mathematics,  Imperial College London, UK, {\tt s.olver@imperial.ac.uk}}}

\titled={Orthogonal polynomials for the de Rham complex on the disk and cylinder}

\maketitle

\Abstract
This paper constructs polynomial bases that capture the structure of the de Rham complex with boundary conditions in  disks and cylinders (both periodic and finite) in a way that respects  rotational symmetry.
The starting point is explicit constructions of vector and matrix orthogonal polynomials on the unit disk that are analogous to the (scalar) generalised Zernike polynomials. We use these to build new orthogonal polynomials with respect to a matrix weight that forces vector polynomials to be normal on the boundary of the disk. The resulting weighted vector orthogonal polynomials  have a simple connection to the gradient of weighted generalised Zernike polynomials, and  their  curl (i.e. vorticity or rot) is a constant multiple of the standard Zernike polynomials which are orthogonal with respect to $L^2$ on the disk. This construction naturally leads to bases in cylinders with simple recurrences relating their gradient, curl and divergence. These bases decouple the de Rham complex into small exact sub-complexes.

\Section{intro} Introduction.

Multivariate orthogonal polynomials (OPs) with respect to an ultraspherical-like inner product on the unit disk
\[
\ip<f, g>_λ := \iint_Ω f(\bfx) g(\bfx) (1-r^2)^λ \D \bfx
\]
for $Ω := \{ \bfx = \vectt[x,y] \in \bbR^2 : \| \bfx \| ≤ 1\}$ are given by the generalised Zernike polynomials
\begin{equation}\label{Equation:Zernike}
\rmz_{mj}^{(λ)}(\bfx) := P_j^{(λ,|m|)}(2r^2-1)\rmy_m(\bfx)
\end{equation}
for the basis of harmonic polynomials
\begin{equation}\label{Equation:harmonic}
\rmy_m(\bfx) = r^{|m|} \E^{\I m θ} = (x + \sign m \I y)^{|m|}
\end{equation}
where $r = \sqrt{x^2 + y^2}$,  $x = r \cos θ$, $y = r \sin θ$, and $P_k^{(a,b)}$ are the Jacobi polynomials, see eg. \cite[Section 18]{DLMF} for their definition.
The Zernike polynomials $\rmz_{mj} := \rmz_{mj}^{(0)}$ were derived in \cite{Zernike} but more recently their generalised form has proven useful for solving partial differential equations in disks \cite{vasil2016tensor}
as they lead naturally to sparse recurrence relationships involving differential operators such as Laplacians
and gradients. They also have nice relationships for power-law kernels \cite{gutleb2023computation} and fractional
Laplacians \cite{papadopoulos2025frame}. Weighted Zernike polynomials
\begin{equation}\label{Equation:Bubbles}
\rmw_{mj}(x,y) := (1-r^2) z_{mj}^{(1)}(x,y).
\end{equation}
are also Sobolev orthogonal polynomials on a disk (or more generally, a ball), see, eg. \cite{xu2006family,xu2008sobolev,perez2013weighted,figueroa2017orthogonal,marriaga2023approximation,figueroa2023weighted}.

This paper considers vector and matrix analogues of Zernike polynomials with  ultraspherical-like inner products leading to simple recurrence relationships for gradients
and a two-dimensional version of curl. In particular, the gradient of  weighted Zernike basis $\rmw_{mj}$ has a simple expression in terms of  a new family of weighted vector orthogonal polynomials $\bfn_{mj}^ν$, which have the property that they are normal on the boundary of the disk (see \thref{gradW}). We depict some low order examples of $\bfn_{mj}^ν$ in \figref{normalpolys}. In turn, the curl (sometimes called the vorticity or rot) of $\bfn_{mj}^ν$ is up-to-a-constant equal to the standard Zernike polynomials $\rmz_{mj}$ (see \thref{curlW}). Thus these are natural bases for the de Rham complex on a disk, and lay the groundwork for incorporation into a Finite Element Exterior Calculus (FEEC) framework
 for solving partial differential equations like the Hodge–Laplacian via a mixed weak formulation or Maxwell's equation, see the review in \cite{Arnold2018}. But the primary goal of this paper is to introduce these new OPs and
their beautiful properties.

An important component of our construction is that the bases behave like Fourier series in the sense that rotations correspond to simple equivariant transformations. This is a very simple example of a {\it symmetry-adapted basis}, see \cite{fassler1976application,stiefel2012group,olver2024parallelisation}, which are bases associated with symmetry groups where the  group action corresponds to multiplication by an irreducible representation. The connection between representation theory and FEEC has been explored recently in the case of simplicial meshes \cite{licht2024symmetry,berchenko2024symmetric}.
In the Fourier case the irreducible representations of a rotation by angle $φ$ are simply multiplication by a scalar $\E^{\I m φ}$. Indeed,  a rotation by $φ$ applied to the arguments $\rmw_{mj}$ and $\rmz_{mj}$ is  a multiplication by $\E^{\I m φ}$ hence these correspond to scalar symmetry-adapted bases. Applying a rotation to the arguments of $\bfn_{mj}^ν$ is multiplication by $\E^{\I m φ}$ combined with a rotation of the vector, an example of a vector symmetry-adapted basis. We
build on the language of symmetry-adapted bases to facilitate future generalisation to balls, where the irreducible representations are matrix-valued, and induced by spherical harmonics. In particular, we repeatedly use the rather trivial result that if $a \in \bbC$ satisfies
\[
\E^{\I m φ}a = a \E^{\I n φ}
\]
for all rotations $φ \in \bbR$ with $m ≠ n$ then $a = 0$. This is a baby version of Schur's lemma (see e.g. \cite[Schur's Lemma 1.7]{FultonRepTheory}), a connection that will be key to future generalisation. Note that Schur's lemma requires irreducible representations over $\bbC$, and hence the use of complex-valued orthogonal polynomials is essential.

\Figurew[normalpolys]{\hsize}
	The real part of some vector polynomials $\bfn_{mj}^ν$, which span all polynomials that are normal on the boundary of the disk. These are a symmetry-adapted basis and the parameter $m$ dictates that it behaves like the corresponding Fourier mode, and the rotational symmetry is evident.  The gradient of weighted generalised Zernike polynomials  $\{\rmw_{mj}\}$ has a simple expression in terms of $\{\bfn_{mj}^ν\}$, and their curl is a constant multiple of the standard Zernike polynomials $\{\rmz_{mj}\}$.


The structure of the paper is as follows:

\secref{symmetryadapted}: We review the notions of invariance and equivariance for rotations and its relationship with vector calculus operators. We also introduce the concept of scalar, vector and matrix symmetry-adapted bases, which in the case of rotations transform under rotations through simple relationships.

\secref{homo}: We give explicit constructions of scalar, vector and matrix symmetry-adapted homogeneous polynomials with respect to rotations.

\secref{ops}: We give explicit constructions of scalar, vector and matrix symmetry-adapted OPs with respect to an ultraspherical-like scalar weights $(1-r^2)^λ$ in terms of Jacobi polynomials.

\secref{vectoruniOPs}: To go beyond scalar weights we will relate multivariate vector OPs to a special class of univariate vector OPs living in a specific module of polynomials. We discuss their properties, and construct vector OPs explicitly for a weight that will lead to the construction of $\bfn_{mj}^ν$. To construct such OPs we adapt the idea of using Cholesky factorisations introduced by Gautschi \cite{gautschi1970construction}, see also \cite{gutleb2024polynomial}, but in this instance the Cholesky factorisation leads to explicit expressions in terms of Jacobi polynomials.

\secref{matrixweights}: We relate univariate vector OPs constructed in \secref{vectoruniOPs} to  multivariate vector OPs with respect to a class of matrix-valued polynomial weights. This connection is used in \secref{normal}  to construct $\bfn_{mj}^ν$, a complete basis of polynomials that are normal on the boundary of the disk.

\secref{veccalc}: We arrive at the main result of the paper: the weighted scalar OPs $\rmw_{mj}$, vector OPs $\bfn_{mj}^ν$ and scalar OPs $\rmz_{mj}$ are a natural basis for the 2D de Rham complex with boundary conditions on the disk. In particular, taking a gradient of weighted Zernike polynomials $\rmw_{mj}$ gives a simple expression in terms of $\bfn_{mj}^ν$. In turn, their curl gives a simple expression in terms of Zernike polynomials $\rmz_{mj}$. These bases can be used to decompose the de Rham complex into simple sub-complexes.

\secref{cylinders}: We see that the results extend naturally to the 3D de Rham complex with boundary conditions on  both periodic cylinders (using Fourier series in the $z$ direction) and finite cylinders (using weighted ultraspherical polynomials, i.e., integrated Legendre polynomials \ala\ \cite{babuska1981p}).

\secref{conc}: We conclude by discussing how  these results can be used to solve partial differential equations like Maxwell's equation in a cylinder.
We also discuss the possibility of extensions to other complexes such as the Koszul complex and the elasticity complex.

\begin{table}[tb]
\centering
\caption{Notations.\label{Table:Notation}}
\[
\begin{array}{c| c}
	\hline\\[-3mm]
	ρ(φ) & \hbox{A matrix corresponding to rotation by $φ$, see \eqref{Equation:rotation}.} \hfill\\
	\calR_φ & \hbox{The linear operator corresponding to rotating  variables, see \eqref{Equation:rotationop}.} \hfill\\
Σ(x,y) & \hbox{$\sopmatrix{
x^2-y^2 & 2xy \\
 2xy  & y^2-x^2
}$, see \eqref{Equation:G}.}\hfill\\
N(x,y) & \hbox{$\sopmatrix{
1 - y^2 & xy \\
x y & 1-x^2
}$, see \eqref{Equation:N}, which causes vectors to be normal at the boundary.} \hfill\\
T(x,y) & \hbox{$\sopmatrix{
1 - x^2 & -xy \\
-x y & 1-y^2
}$, see \eqref{Equation:T}, which causes vectors to be tangential at the boundary.} \hfill\\
\calM & \hbox{A special module of vector polynomials, see \eqref{Equation:calM}.}\hfill\\
\calP_m & \hbox{Map between $\calM$ and symmetry-adapted vector polynomials, see \lmref{Pm}.}\hfill\\
\calN & \hbox{The space of vector polynomials normal to the boundary of the disk, see \eqref{Equation:calN}.} \hfill\\
\weirdrat & \hbox{The constant $10n^2 + n(13b+19)+ 4(b+1)(b+2)$, see \defref{normalop}.}\hfill\\
κ_{mj} & \hbox{A constant arising in the recurrence relationship for curl, see \eqref{Equation:kappa}.}\hfill\\[2mm]
\hline
\end{array}
\]
\end{table}

\begin{table}[tb]
\centering
\caption{Different types of polynomials.\label{Table:polys}}
\[
\begin{array}{c| c}
	\hline\\[-3mm]
\rmy_m(x,y) & \hbox{Harmonic polynomials, see \eqref{Equation:harmonic}.} \hfill\\
\bfy_m(x,y) & \hbox{A vector analogue of harmonic polynomials, see \defref{vectorharmonicpolys}.} \hfill\\
\rmY_m(x,y) & \hbox{A matrix analogue of harmonic polynomials, see \defref{matrixharmonicpolys}.} \hfill\\
\rmz_{mj}^{(λ)}(x,y) & \hbox{Generalised Zernike polynomials orthogonal to $(1-r^2)^λ$, see \eqref{Equation:Zernike}.} \hfill\\
\rmw_{mj}(x,y) & \hbox{Weighted generalised Zernike polynomials $(1-r^2) \rmz_{mj}^{(1)}(x,y)$, see \eqref{Equation:Bubbles}.} \hfill\\
\bfz_{mj}^{(λ),ν}(x,y) & \hbox{Vector Zernike polynomials orthogonal to $(1-r^2)^λ$, see \defref{vectorZernike}.} \hfill\\
\rmZ_{mj}^{(λ),ν}(x,y) & \hbox{Matrix Zernike polynomials orthogonal to $(1-r^2)^λ$, see \defref{matrixZernike}.} \hfill\\
\bfp_n^{V,ν}(t) & \hbox{Polynomials in $\calM$ orthogonal with respect to a matrix weight $V$, see \eqref{Equation:OPsinM}.} \hfill\\
\bfp_n^{(a,b),ν}(t) & \hbox{Polynomials in $\calM$ orthogonal with respect to $(1-t)^a t^b$, see \propref{jacobiWt}.} \hfill\\
\bfq_n^{(b),ν}(t) & \hbox{Polynomials in $\calM$ orthogonal with respect to $\sopmatrix{1 \hfill\\ & 1-t} t^b$, see \defref{normalop}.} \hfill\\
\bfv_{mj}^{W,ν}(x,y) & \hbox{Vector polynomials orthogonal with respect to a matrix weight $W$, see \thref{rhoGortho}.} \hfill\\
\bfn_{mj}^ν(x,y) & \hbox{A basis for $\calN$ of weighted vector OPs, see \defref{normalops}.} \hfill\\
\bfn_{mj}^±(x,y) & \hbox{A simple recombination of $\bfn_{mj}^1$ and $\bfn_{mj}^2$, see \defref{nplus}.}\hfill\\
\bft_{mj}^ν(x,y) & \hbox{A basis for polynomials tangent on the boundary of a disk, see \eqref{Equation:tangent}.} \hfill\\[2mm]
\hline
\end{array}
\]
\end{table}

In Table~\ref{Table:Notation} we give the notations used in this paper.
In Table~\ref{Table:polys} we list the different types of polynomials we will use.

\medskip

\noindent{\bf Acknowledgements}:  I thank Doug Arnold, Kaibo Hu, Christoph Ortner, Vic Reiner, Lior Silberman,  Alex Townsend, Heather Wilber and Grady Wright for helpful discussions and suggestions.

\Section{symmetryadapted} Invariance, equivariance, and vector calculus.

In this section we discuss  invariance and equivariance with respect to rotations and its relationship with vector calculus and symmetry-adapted bases. We denote rotation matrices by
\begin{equation}\label{Equation:rotation}
ρ(φ) := \sopmatrix{ \cos φ & -\sin φ \\
\sin φ & \cos φ}
\end{equation}
and
the linear operator corresponding to a rotating the variables by $φ$ as
\begin{equation}\label{Equation:rotationop}
\calR_φ u(\bfx) := u(ρ(φ) \bfx).
\end{equation}

\subsection{Invariance and Equivariance}\label{Section:InvOrEqui}

An important notion is invariance to rotations:

\Definition{invariance}
A scalar function $f : Ω \rightarrow \bbC$ is invariant (to rotations) if it satisfies:
\[
\calR_φ f(\bfx) = f(\bfx).
\]
In other words,
\[
f(ρ(φ) \bfx) = f(\bfx).
\]

To extend this notion to vector-valued functions we need to incorporate a notion of equivariance: rather than functions not changing under rotations, the analogue are functions where the vector output is also rotated.
It is natural to view vector-valued functions as maps $Ω \rightarrow \bbC^2$, that is to a Euclidean vector space.
But the notion of equivariance is more natural when discussed in terms of tangent spaces.

\Definition{tangent}
Denote the (complex) {\it tangent space} at $\bfx \in Ω$ as:
\[
\calT_\bfx :=  \{ a \bfe_r^{\bfx} + b \bfe_θ^{\bfx} : a,b \in \bbC\}.
\]
for the two orthogonal vectors
\begin{equation}\label{Equation:er}
\bfe_r^{\vc x} := {1 \over r} \Vectt[x,y] = \Vectt[\cos θ, \sin θ],\qquad \bfe_θ^{\vc x} := {1 \over r}  \Vectt[-y,x] = \Vectt[-\sin θ, \cos θ]
\end{equation}
where $r = \sqrt{x^2+y^2}$, $x = r \cos θ$ and $y = r \sin θ$. When $\bfx$  is clear from context we write $\bfe_r$ and $\bfe_θ$.

Rotations map these vectors via the rule:
\begin{equation}
ρ(φ) \bfe_r^{\vc x} = \calR_φ \bfe_r^{\bfx} = \bfe_r^{ρ(φ)\vc x}, \qquad  ρ(φ) \bfe_θ^{\vc x}  = \calR_φ \bfe_θ^{\bfx} =  \bfe_θ^{ρ(φ)\vc x}. \label{Equation:rotateorthovecs}
\end{equation}
Thus we can interpret multiplying a vector by $ρ(φ)$ as a map from an element in the tangent space of $\bfx$ to
the tangent space of $ρ(φ) \bfx$. More precisely, rotation induces a map $ρ(φ) : \calT_\bfx \rightarrow \calT_{ρ(φ) \bfx}$ via the formula
\[
ρ(φ) \underbrace{( a \bfe_r^{\bfx } + b \bfe_θ^{\bfx})}_{\in \calT_{\bfx}} =   \underbrace{a \bfe_r^{ρ(φ)\bfx } + b \bfe_θ^{ρ(φ)\bfx}}_{\in \calT_{ρ(φ)\bfx}}.
\]
A vector-valued function is equivariant if we have
\[
\underbrace{\calR_φ 𝐟(\bfx)}_{\in \calT_{ρ(φ) \bfx}} = \underbrace{ρ(φ)}_{\calT_\bfx \rightarrow \calT_{ρ(φ)\bfx}} \underbrace{𝐟(\bfx)}_{\in \calT_{\bfx}}.
\]
But in this case $\calT_\bfx = \calT_{ρ(φ)\bfx} =  \bbC^2$ so we can re-express this relationship without using tangent spaces\footnote{Future extensions to spherical caps \ala\ \cite{snowball2021sparse} will require tangent spaces.}:

\Definition{vectorequivariance}
A vector-valued function $𝐟 : Ω \rightarrow \bbC^2$ is equivariant (to rotations) if it satisfies:
\[
\calR_φ 𝐟 = ρ(φ) 𝐟.
\]
In other words,
\[
𝐟(ρ(φ) \bfx) = ρ(φ) 𝐟(\bfx).
\]

Note that  a constant vector-valued function like $\bfx \mapsto \vectt[1,0]$ is invariant to rotations but it is {\it not equivariant}.
On the other hand,  the functions $\bfx \mapsto \bfx$ and $\bfx \mapsto \vectt[-y,x]$ are equivariant.

We can also interpret matrix-valued functions $F : Ω \rightarrow \bbC^{2 \times 2}$ as linear operators acting on tangent spaces, i.e., $F(\bfx) : \calT_{\bfx} \rightarrow \calT_{\bfx}$.  With this in mind we  extend the notion of equivarance  to matrices by ensuring that we transform the space they act on in an appropriate way:
\[
\underbrace{F(ρ(φ) \bfx)}_{\calT_{ρ(φ)\bfx} \rightarrow \calT_{ρ(φ)\bfx}} \quad\underbrace{ρ(φ)}_{\calT_{\bfx} \rightarrow \calT_{ρ(φ)\bfx}} =
\underbrace{ρ(φ)}_{\calT_{\bfx} \rightarrow \calT_{ρ(φ)\bfx}}\quad \underbrace{F(\bfx)}_{\calT_{\bfx} \rightarrow \calT_{\bfx}}.
\]
Dropping the use of tangent spaces we have the following:

\Definition{matrixequivariance}
A matrix-valued function $F : Ω \rightarrow \bbC^{2 \times 2}$ is equivariant (to rotations) if it satisfies:
\[
\calR_φ F ρ(φ) = ρ(φ) F.
\]
That is to say
\[
F(ρ(φ) \bfx) ρ(φ) = ρ(φ) F(\bfx).
\]

\begin{remark}
	We will  use right-associativity for operators, so that in the above notation we have
\[
\calR_φ F ρ(φ) = \calR_φ[F ρ(φ)].
\]
Though in this case multiplication by a matrix is independent of the change-of-variables so associativity does not impact the calculation as we have:
\[
\calR_φ[F ρ(φ)](\bfx) = F(ρ(φ)\bfx) ρ(φ) = \calR_φ[F](\bfx) ρ(φ)
\]
\end{remark}

A trivial example of an equivariant matrix-valued functions is the identity $\bfx \mapsto I$.
Indeed, any function of the form $f(r) I$ is equivariant.
Another example which we will use throughout the paper is
\begin{equation}\label{Equation:G}
Σ(x,y) := \sopmatrix{
x^2-y^2 & 2xy\\
 2xy  & y^2-x^2
}.
\end{equation}
To see this is equivariant it helps to express the matrix in terms of $\bfe_r^\bfx$ and $\bfe_θ^\bfx$:
\[
Σ(x,y) = \sopmatrix{x & -y \\
y & x}  \sopmatrix{1 \\ & -1} \sopmatrix{x & y \\
-y & x} = \bvect[\bfe_r^{\bfx},\bfe_θ^{\bfx}] \sopmatrix{r^2 \\ & -r^2} \Vectt[(\bfe_r^{\bfx})^\top,(\bfe_θ^{\bfx})^\top].
\]
This implies that
\meeq{
\calR_φ Σ(\bfx) ρ(φ) =
\bvect[\bfe_r^{ρ(φ)\bfx},\bfe_θ^{ρ(φ)\bfx}] \sopmatrix{r^2 \\ & -r^2} \Vectt[(\bfe_r^{ρ(φ)\bfx})^\top,(\bfe_θ^{ρ(φ)\bfx})^\top] ρ(φ) \ccr
= ρ(φ) \bvect[\bfe_r^{\bfx},\bfe_θ^{\bfx}] \sopmatrix{r^2 \\ & -r^2} \Vectt[(\bfe_r^{\bfx})^\top,(\bfe_θ^{\bfx})^\top]
= ρ(φ)Σ(\bfx).
}

\subsection{Symmetry-adapted bases}

To incorporate rotational symmetry into a basis of functions that are not invariant or equivariant (in the sense used in the previous section) we need
to allow change with rotations, but where this change is equivalent to multiplication\footnote{This could also be called {\it equivariance} but  we reserve the term equivariance for the definitions in \secref{InvOrEqui}.}  by $\E^{\I m φ}$.
These correspond to irreducible representations of the group $SO(2)$, an analogy which motivates the terminology
of what follows, see \cite{olver2024parallelisation}.

\Definition{symmetryadapted}
	A {\it (scalar)  symmetry-adapted function} $p : Ω \rightarrow \bbC$ with mode $m$ is one where  a rotation becomes multiplication:
\[
\calR_φ p = p \E^{\I m φ},
\]
i.e.,
\[
p(ρ(φ) \bfx) = p(\bfx) \E^{\I m φ}.
\]
A {\it (scalar) symmetry-adapted basis} is one whose basis elements are symmetry-adapted functions.

The basis of harmonic polynomials defined in the introduction form a symmetry-adapted basis: $\rmy_m(\bfx)$ is symmetry adapted with mode $m$  since we have
\meeq{
\calR_φ \rmy_m(\bfx) = \rmy_m(
x \cos φ  - y \sin φ,
x \sin φ  + y \cos φ) = r^{|m|} \E^{\I m (θ+φ)} =  \rmy_m(\bfx) \E^{\I m φ}.
}

In the vector case we also rotate the vector in accordance with the principle of equivariance:

\Definition{vectorsymmetryadapted}
	A {\it vector  symmetry-adapted function} $\bfp : Ω \rightarrow \bbC^2$ with mode $m$ satisfies
\[
\calR_φ \bfp= ρ(φ) \bfp \E^{\I m φ},
\]
i.e.,
\[
\bfp(ρ(φ) \bfx) = ρ(φ) \bfp(\bfx) \E^{\I m φ}.
\]
A {\it vector symmetry-adapted basis} is one whose basis elements are vector symmetry-adapted functions.

Note if $\bfp$ is a vector symmetry-adapted function and $G$ is a matrix equivariant function then $G \bfp$ is a vector symmetry-adapted function
with the same mode as $\bfp$ since
\[
ρ(φ) G \bfp \E^{\I m φ} = \calR_φ[G] ρ(φ) \bfp \E^{\I m φ}  = \calR_φ[G]  \calR_φ[\bfp]  = \calR_φ[G\bfp].
\]

We can extend this notion to matrix-valued functions as well by rotating the vectors they act on appropriately:

\Definition{matrixsymmetryadapted}
A {\it matrix  symmetry-adapted function} $P : Ω \rightarrow \bbC^{2\times2}$ with mode $m$ satisfies
\[
\calR_φ P ρ(φ) = ρ(φ) P \E^{\I m φ},
\]
i.e.,
\[
P(ρ(φ) \bfx) ρ(φ) = ρ(φ) P(\bfx) \E^{\I m φ}.
\]
A {\it matrix symmetry-adapted basis} is one whose basis elements are matrix  symmetry-adapted functions.

An important feature of symmetry-adapted bases is that symmetry-adapted functions with different modes are
automatically orthogonal. To avoid issues with regularity we state the results in this paper in terms of {\it symmetry-adapted polynomials}:

\Lemma{ortho}
If $a$ and $b$ are  scalar  symmetry-adapted polynomials with respect to modes $m$ and $n$, respectively, with $m ≠ n$ then $\ip<a, b>_w  = 0$ where, for any invariant weight $w$,
\[
\ip<a, b>_w := \iint_Ω \bar a(\bfx) b(\bfx)  w(\bfx) \D \bfx.
\]
If $\bfa$ and $\bfb$ are  scalar  symmetry-adapted polynomials with respect to modes $m$ and $n$, respectively, with $m ≠ n$ then $\ip<\bfa, \bfb>_W  = 0$ where, for any matrix equivariant weight $W$,
\[
\ip<\bfa, \bfb>_W := \iint_Ω  \bfa(\bfx)^\star W(\bfx) \bfb(\bfx)   \D \bfx.
\]
If $A$ and $B$ are matrix  symmetry-adapted polynomials with respect to modes $m$ and $n$, respectively, with $m ≠ n$  then
$
\iip<A, B>_w = 0
$
where, for any invariant weight $w$, we define the matrix inner product
\[
\iip<A, B>_w := \iint_Ω \ipF<A(\bfx),B(\bfx)>  w(\bfx) \D \bfx,
\]
for the Fröbenius inner product $\ipF<F,G> := \mathrm{Tr}(F^\star G)$.

\Proof

First note that we know the adjoint of a rotation, i.e., for any $a,b$,
\[
\ip<\calR_φ a,b>_w = \iint_Ω \bar a(ρ(φ)\bfx) b(\bfx)  w(r) \D \bfx = \iint_Ω \bar a(\bfx) b(ρ(-φ)\bfx) |\det ρ(φ)| w(r) \D \bfx  = \ip<a, \calR_{-φ} b>_w.
\]
We therefore have for all $φ$
\[
\E^{\I m φ} \ip<a, b>_w =  \ip<\E^{-\I m φ}a, b>_w = \ip<\calR_{-φ} a, b>_w = \ip< a, \calR_φ b>_w
= \ip<a, b>_w \E^{\I n φ}
\]
which shows that $\ip<a, b>_w = 0$ when $m ≠ n$ (which as explained in the introduction is a trivial version of Schur's lemma).
In the vector case, using the equivariance of $W$ in the form $ρ(φ) W(ρ(-φ) \bfx) = W(\bfx) ρ(φ)$, we have for, any $\bfa,\bfb$,
\meeq{
\ip<ρ(-φ) \calR_φ \bfa,\bfb>_W = \iint_Ω  \bfa(ρ(φ)\bfx)^\star ρ(φ) W(\bfx) \bfb(\bfx) \D \bfx \ccr
= \iint_Ω  \bfa(\bfx)^\star ρ(φ) W(ρ(-φ) \bfx) \bfb(ρ(-φ)\bfx) |\!\det ρ(φ)| \D \bfx \ccr
 = \iint_Ω  \bfa(\bfx)^\star W(\bfx) ρ(φ)\bfb(ρ(-φ)\bfx)  \D \bfx  = \ip<\bfa, ρ(φ) \calR_{-φ} \bfb>_W.
}
 It follows that, for all $φ$,
\[
\E^{\I m φ} \ip<\bfa, \bfb>_W = \ip<ρ(φ) \calR_{-φ} \bfa, \bfb>_W = \ip< \bfa, ρ(-φ) \calR_{φ} \bfb>_W
= \ip<\bfa, \bfb>_W \E^{\I n φ},
\]
and hence $\ip<\bfa, \bfb>_W =0$ when $m ≠ n$.

Finally, in the matrix case we first note since the trace is invariant to conjugation by an orthogonal matrix we have (for any matrices $A,B$)
\meeq{
\ipF<ρ(-φ) A ρ(φ),B> = \mathrm{Tr}(ρ(-φ) A^\star ρ(φ) B)
=  \mathrm{Tr}(ρ(-φ) A^\star ρ(φ) B ρ(-φ) ρ(φ)) \ccr
=  \mathrm{Tr}( A^\star ρ(φ) B ρ(-φ)) = \ipF<A,ρ(φ) B ρ(-φ)>.
}
Hence, for any matrix polynomials $A,B$,
\meeq{
\iip<ρ(-φ) \calR_φ A ρ(φ),B>_w
= \iint_Ω \ipF<ρ(-φ)A(ρ(φ)\bfx)ρ(φ),B(\bfx)>  w(\bfx) \D \bfx \ccr
= \iint_Ω \ipF<A(ρ(φ)\bfx),ρ(φ) B(\bfx) ρ(-φ)>  w(\bfx) \D \bfx \ccr
= \iint_Ω \ipF<A(\bfx),ρ(φ) B(ρ(-φ)\bfx) ρ(-φ)>  w(ρ(-φ)\bfx) \D \bfx\ccr
=\iip<A,ρ(φ) \calR_{-φ}  B ρ(-φ)>_w.
}
Thus we have, for matrix symmetry-adapted polynomials $A,B$ with respect to $m,n$ and all $φ$,
\meeq{
\E^{\I m φ} \iip<A, B>_w =  \iip<\E^{-\I m φ}A, B>_w =
\iip<ρ(φ)\calR_{-φ} A ρ(-φ), B>_w \ccr=
\iip<A, ρ(-φ)\calR_φ B ρ(φ)>_w =
 \iip<A, B>_w \E^{\I n φ}.
}
Therefore  $\iip<A, B>_w  = 0$ when $m ≠ n$.

\mqed

%


\subsection{Vector calculus}

An important feature of a symmetry-adapted function is that applying  differential operators
with certain rotational symmetries maintain the mode.  Here we consider  basic differential operators:
\begin{align*}
	\nabla &:= \Vectt[\partial_x,\partial_y] =   \bfe_r \partial_r + {1 \over r} \bfe_θ \partial_θ,&\qquad \hbox{(Gradient)} \\
	\nablaperp &:= \sopmatrix{0 & -1 \\ 1 & 0} \nabla = \Vectt[-\partial_y,\partial_x] =   \bfe_θ \partial_r - {1 \over r} \bfe_r \partial_θ,&\qquad \hbox{(Rotated Gradient)} \\
	\divergence &:= \nabla^\top = \vect[\partial_x, \partial_y] = {1\over r}\br[ \partial_r r \bfe_r^\top +  \partial_θ \bfe_θ^\top], &\qquad \hbox{(Divergence)} \\
	\curl &:= \nablaperp^\top = \vect[-\partial_y,\partial_x] = \divergence \underbrace{\sopmatrix{0 & 1 \\ -1 & 0}}_{ρ(-π/2)} = {1\over r} \br[ \partial_r r \bfe_θ^\top - \partial_θ \bfe_r^\top] & \qquad \hbox{(Curl)} \\
	Δ &:= \partial_{x}^2 + \partial_{y}^2 = \divergence\nabla= \partial_{r}^2 + {1 \over r} \partial_r + {1 \over r^2} \partial_θ^2, &\qquad \hbox{(Laplacian)}
\end{align*}
where $\partial_r = {x \partial_x + y \partial_y \over r}$ and $\partial_θ = x \partial_y -y \partial_x$. The formula for curl follows by transposing $ρ(π/2)  \bfe_r = \bfe_θ$ and  $ρ(π/2)  \bfe_θ = -\bfe_r$. To clarify the notation, note here that any vector-valued function $\bff : Ω \rightarrow \bbC^2$ can be written
\[
\bff(\bfx) =  g_1(\bfx) \bfe_r + g_2(\bfx) \bfe_θ
\]
where $\bfe_r^\top \bff = g_1$ and $\bfe_θ^\top \bff = g_2$.
Hence, for example, the divergence and curl formulae are given by
\[
\divergence \bff = {\partial_r[r g_1] +  \partial_θ g_2  \over r}, \qquad \curl \bff = {\partial_r[r g_2] -  \partial_θ g_1  \over r}.
\]

Each of these operators  intertwines with rotations in the following senses, where to avoid issues of regularity we assume the operations are acting on polynomials:

\Lemma{gradcommute} For all polynomials we have
\meeq{
	 \calR_φ \nabla = ρ(φ) \nabla \calR_φ, & \calR_φ \nablaperp & = ρ(φ) \nablaperp \calR_φ, \ccr
	 \calR_φ \divergence ρ(φ) = \divergence \calR_φ, & \calR_φ \curl ρ(φ) &= \curl \calR_φ,
}
Therefore $\calR_φ Δ = Δ \calR_φ$.

\begin{proof}
	Note that the polar coordinate partial derivatives commute with rotations:
\[
\partial_r \calR_φ = \calR_φ \partial_r, \qquad \partial_θ \calR_φ = \calR_φ \partial_θ.
\]
This combined with \eqref{Equation:rotateorthovecs} shows the result for the gradient and divergence:
\meeq{
\calR_φ \nabla  = \calR_φ\br[\bfe_r^{\bfx} \partial_r + {1 \over r} \bfe_θ^{\bfx} \partial_θ]
= \br[\bfe_r^{ρ(φ)\bfx}  \partial_r + {1 \over r} \bfe_θ^{ρ(φ)\bfx}  \partial_θ] \calR_φ =  ρ(φ) \nabla \calR_φ, \ccr
\calR_φ \divergence =
{1\over r}\br[ \partial_r r (\bfe_r^{ρ(φ) \bfx})^\top +  \partial_θ (\bfe_θ^{ρ(φ) \bfx})^\top] \calR_φ = \divergence ρ(-φ) \calR_φ.
}
The formul\ae\ for rotated gradient and curl follow:
\meeq{
\calR_φ \nablaperp = ρ(π/2) \calR_φ \nabla = ρ(π/2+φ) \nabla   \calR_φ  = ρ(φ) \nablaperp \calR_φ, \ccr
\calR_φ  \curl = \calR_φ \divergence ρ(-π/2) = \divergence ρ(-π/2-φ) \calR_φ = \curl ρ(-φ) \calR_φ.
}
	Finally, the Laplacian follows from combining the divergence and gradient via:
\[
Δ \calR_φ = \divergence \nabla \calR_φ =  \divergence ρ(-φ) \calR_φ \nabla = \calR_φ Δ.
\]
\end{proof}

\Corollary{diffopssymmetryadapted} Each differential operator maps a symmetry-adapted polynomial
to another symmetry-adapted polynomial with the same mode.

\begin{proof}
	Suppose $p$ is a scalar  symmetry-adapted polynomial with mode $m$. Then $\nabla p$ satisfies:
\[
\calR_φ\nabla p = ρ(φ) \nabla \calR_φ p = ρ(φ) \nabla p \E^{\I m φ}.
\]
If $\bfp$ is a vector symmetry-adapted polynomial with mode $m$ then
\meeq{
\calR_φ \divergence \bfp = \calR_φ \divergence ρ(φ) ρ(-φ) \bfp
=  \divergence \calR_φ ρ(-φ) \bfp =
\divergence \bfp \E^{\I m φ},
}
with the exact same argument showing the curl case. Finally the Laplacian case comes from combining the gradient and divergence.
\end{proof}

The results  extend to the transpose of the Jacobian matrix which we denote:
\[
\nabla \bff^\top := \sopmatrix{ f_{1x} & f_{2x} \\f_{1y} & f_{2y}}.
\]

\Corollary{vectorgradientmode}
If $\bfp$ is a vector symmetry-adapted polynomial with mode $m$ then $\nabla \bfp^\top$ is a matrix  symmetry-adapted polynomial with the same mode.

\begin{proof}
	Using the formula for gradients in \lmref{gradcommute} we have
\meeq{
\calR_φ \nabla \bfp^\top ρ(φ) = ρ(φ) \nabla (ρ(-φ)\calR_φ\bfp)^\top   = ρ(φ) \nabla ( \bfp \E^{\I m φ} )^\top
 = ρ(φ) \nabla \bfp^\top \E^{\I m φ}.
}
\end{proof}

This will guarantee orthogonality when combined with \lmref{ortho}.


\Section{homo} Symmetry-adapted homogeneous polynomials.

The aim of this section is to construct symmetry-adapted bases for homogeneous scalar, vector, and matrix polynomials.
We will later see that they can be orthogonalised in closed form for simple ultraspherical-like weights on the disk in terms of Jacobi polynomials.
Our general approach is to relate values of functions at $\bfx = r \vectt[\cos θ, \sin θ]$ to the value rotated to $θ = 0$, that is
at $r \bfe_1 = \vectt[r,0]$.

\Subsection Scalar homogeneous polynomials.

We present the scalar case in a way that will generalise to the vector and matrix case. The formula for a symmetry-adapted function, $\rmy_m(ρ(φ) \bfx)  = \rmy_m(\bfx) \E^{\I m φ}$,
 can  be used to deduce the harmonic polynomials from values on the $x$-axis:
\[
\rmy_m(\bfx)   = \rmy_m(r,0)  \E^{\I m φ} = r^{|m|} \E^{\I m φ}.
\]
Note that $r^{2k} \rmy_m(\bfx) $ are  symmetry-adapted homogeneous polynomials of degree $|m|+2k$.
Combining these  symmetry-adapted polynomials we can form a symmetry-adapted basis for all homogeneous polynomials
(and thence all polynomials):

\Proposition{scalarhomogeneous} A symmetry-adapted basis of degree $n$ homogeneous polynomials is given by, for $n$ even,
\[
	\rmy_{±n}, r^2 \rmy_{±(n-2)}, \ldots, r^{n-2} \rmy_{±2},  r^n \rmy_0,
\]
and for $n$ odd
\[
	\rmy_{±n}, r^2 \rmy_{±(n-2)}, \ldots,  r^{n-1} \rmy_{±1}.
\]

\Proof

We need to show this is a basis, i.e.,  they are linearly indepedent and their spans have the correct dimension.
Note there are exactly $(n+1)$-polynomials of degree $n$.
As they all have different modes they are orthogonal, by \lmref{ortho}.

\mqed

\Subsection Vector homogeneous polynomials.

The gradient of the basis of harmonic polynomials is
\begin{align*}
 \nabla \rmy_m &=  \nabla[r^{|m|} \E^{\I m θ}]  = |m| r^{|m|-1} ( \bfe_r + \I \sign m \, \bfe_θ) \E^{\I m θ}.
\end{align*}
From  \corref{diffopssymmetryadapted} we know that $\nabla \rmy_m$ is a vector symmetry-adapted function with mode $m$,
that is it satisfies:
\[
\calR_φ \nabla y_m(\bfx) = ρ(φ) \nabla y_m(\bfx) \E^{\I m φ}.
\]
 As in the scalar case, this formula  gives an expression in terms of the value at $θ = 0$:
\[
\nabla \rmy_m(\bfx)=  ρ(θ)\nabla \rmy_m(r,0) \E^{\I m θ}  = |m| r^{|m|-1}  ρ(θ) \Vectt[1,\I \sign m] \E^{\I m θ}.
\]
We use this to define the following vector analogues of a basis of harmonic polynomials, removing the multiplication by $m$ to extend the definition to $m = 0$:
\Definition{vectorharmonicpolys}
\begin{align*}
\bfy_0(\bfx) &:= r^{-1} ρ(θ) \Vectt[1,\I] = r^{-1}\pr( \bfe_r  + \I \bfe_θ ) = r^{-2} \sopmatrix{x - \I y \\ y + \I x}, \\
\bfy_m(\bfx) & := r^{|m|-1}  ρ(θ)  \Vectt[1,\I \sign m]   \E^{\I m θ}
= r^{|m|-1}  \pr( \bfe_r  + \I \sign m \bfe_θ ) \E^{\I m θ},\qquad  m≠ 0.
\end{align*}
For $m ≠ 0$ these are also homogeneous polynomials  of degree $|m|-1$ (since they are rescaled versions of $\nabla \rmy_m$ and partial derivatives map homogeneous polynomials to homogeneous polynomials).
For $m = 0$, where it is no longer the gradient of a harmonic polynomial, it is a degree 1 homogeneous polynomial divided by $r^2$.
Note  that constant vectors are {\it not} equivariant, they are spanned by vectors with mode $±1$:
\[
\bfy_{±1}(\bfx) = \Vectt[1,±\I].
\]

Now $r^{2k} \bfy_m$ will also be a vector  symmetry-adapted polynomial with mode $m$ but  of degree $|m|+2k-1$ for $m ≠ 0$ or $k > 1$. However, these polynomials do not span all polynomials.
Using the matrix equivariant function $Σ$ introduced in \eqref{Equation:G} allows us to construct a complete basis of homogeneous polynomials. For example,
we have two linearly independent degree $1$ equivariant polynomials given by:
\[
r^2 \bfy_0 = \Vectt[x - \I y, y + \I x], \qquad Σ \bfy_0 = \Vectt[x + \I y, y - \I x].
\]
To show this style of modification gives us a complete basis of linearly independent polynomials for general $m$ we establish an orthogonality property:



\Lemma{rhoGortho}
If $\bfa, \bfb : Ω \rightarrow \bbC^2$  are vector symmetry-adapted polynomials with mode $m$
 and $\bfa(r,0) = f(r) \Vectt[1,±\I]$, $\bfb(r,0) = g(r)  \Vectt[1,±\I]$ then
$
\ip<\bfa, Σ \bfb>_w = 0
$.

\Proof
We find:
\meeq{
\ip<\bfa,Σ \bfb>_w =  \int _0^1\int_0^{2π} \bfa(r ρ(θ) \vc e_1)^\star  Σ(r ρ(θ) \vc e_1) \bfb(r ρ(θ) \vc e_1) \Dθ w(r) r \D r\ccr
= \int_0^{2π} \E^{-\I m θ} \int_0^1 \bfa(r,0)^\star  Σ(r,0)  \bfb(r,0) w(r) r \D r  \E^{\I m θ} \D θ \ccr
= 2π \int_0^1 \underbrace{\sopmatrix{1 & \mp \I}   \sopmatrix{ 1 \\& -1} \Vectt[1, ±\I]}_{=0}   f(r) g(r)  w(r) r^3 \D r = 0.
}

\mqed

We thus can construct a complete basis of homogeneous vector polynomials that are symmetry-adapted:

\Lemma{vectorhomogeneous} A symmetry-adapted basis of degree $n$ homogeneous vector polynomials is given by,
 for $n$ even,
\begin{align*}
	\bfy_{±(n+1)}, r^2 \bfy_{±(n-1)}, \ldots,  r^n \bfy_{±1}, \\
	Σ \bfy_{±(n-1)}, r^2 Σ \bfy_{±(n-3)}, \ldots,  r^{n-2} Σ \bfy_{±1}.
\end{align*}
and, for $n$ odd,
\begin{align*}
	\bfy_{±(n+1)}, r^2 \bfy_{±(n-1)}, \ldots, r^{n-1} \bfy_{±2},  r^{n+1} \bfy_0, \\
	Σ \bfy_{±(n-1)}, r^2 Σ \bfy_{±(n-3)}, \ldots,  r^{n-3} Σ \bfy_{±2}, r^{n-1}Σ \bfy_0,
\end{align*}
where again
\[
Σ(x,y) := \sopmatrix{
x^2-y^2 & 2xy\\
 2xy  & y^2-x^2
}.
\]

\Proof
We will show this is in fact a complete orthogonal basis of degree $n$ homogeneous polynomials. In each case there are $2(n+1)$ homogeneous polynomials and hence we only need to show orthogonality. We first note that $r^{2k}Σ \bfy_m$ is a vector symmetry-adapted function, as
\[
\calR_φ r^{2k}Σ \bfy_m =  r^{2k} ρ(φ)Σρ(φ)^\top  ρ(φ) \bfy_m \E^{\I m φ} =  r^{2k} ρ(φ)Σ\bfy_m \E^{\I m φ}.
\]
As in the scalar case, vector symmetry-adapted polynomials corresponding to different modes are orthogonal (\lmref{ortho}). The functions with the same degree and mode, $r^{2k} \bfy_m$ and $r^{2k-2}  Σ \bfy_m$, are also orthogonal by the previous lemma.

\mqed

\Subsection Matrix homogeneous polynomials.

We now consider the transpose of the Jacobian of the vector analogues of harmonic polynomials. We compute, for $m ≠ 0$,
\meeq{
\nabla\bfy_m^\top =(|m|-1) r^{|m|-2} \br[ \bfe_r \bfe_r^\top -\bfe_θ \bfe_θ^\top +\I \sign m \pr(\bfe_r \bfe_θ^\top  + \bfe_θ \bfe_r^\top ) ] \E^{\I m θ}
}
which  by \corref{vectorgradientmode} is a matrix symmetry-adapted polynomial with mode $m$.
As in the vector case we can relate values of these to those on the $x$-axis, where we have
\meeq{
\nabla\bfy_m^\top(r,0) = (|m|-1) r^{|m|-2} \sopmatrix{1 & \I \sign m \\ \I \sign m & -1 }.
}
Dividing by $|m|-1$ motivates the definition of a matrix-analogue of harmonic polynomials, with a natural extension to $m = 0$:
\Definition{matrixharmonicpolys}
\begin{align*}
\rmY_0(\bfx) &:= r^{-2} ρ(θ) \sopmatrix{1 & \I  \\  \I & -1 } ρ(-θ) =  \sopmatrix{
1 & \I \\ \I & -1} r^{-2} \E^{-2 \I θ},
 \\
\rmY_m(\bfx) &:=  r^{|m|-2} ρ(θ) \sopmatrix{1 & \I \sign m \\ \I \sign m & -1 } ρ(-θ) \E^{\I m θ} \\
&=
\sopmatrix{
1 & \I \sign m \\ \I \sign m & -1} r^{|m|-2}  \E^{\I (m - 2\sign m) θ}, \qquad m ≠ 0.
\end{align*}
These are degree $|m|-2$ homogeneous polynomials for $|m| ≥ 2$ by their relationship with the gradient. In particular, we have two constant functions, with mode $±2$:
\[
\rmY_{±2}(\bfx) =  \sopmatrix{1 & ±\I \\ ±\I  & -1}.
\]
 For $m = 0$ and $m =±1$ we note that it is a polynomial divided by $r^4$ and $r^2$, respectively:
\meeq{
\rmY_0(\bfx) =  \sopmatrix{
1 & \I \\ \I & -1} {(x-\I y)^2 \over r^4},\qquad
\rmY_{±1}(\bfx) = \sopmatrix{
1 & ±\I \\ ±\I & -1} {x\mp \I y \over r^2}.
}
We can multiply these by $r^{2k}$ to get families of homogeneous polynomials of degree $2k+|m|-2$ when $|m| ≥ 2$, $m = ±1$ and $k ≥ 1$, or $k ≥ 2$. We can also multiply by $Σ$ on the left and right, which will increase the degree by $2$ whilst maintaining the mode.
Note that
\meeq{
Σ(x,y) \sopmatrix{1 & ±\I \\
±\I & -1} = \sopmatrix{1 & ±\I \\
\mp\I & 1} (x ± \I y)^2,\qquad 
\sopmatrix{1 & ±\I \\
±\I & -1} Σ(x,y) = \sopmatrix{1 & \mp\I \\
±\I & 1} (x±\I y)^2, \\
Σ(x,y) \sopmatrix{1 & ±\I \\
±\I & -1} &Σ(x,y) = \sopmatrix{1 & \mp\I \\
\mp\I & -1} (x±\I y)^4.
}

We can establish orthogonality via a matrix analogue of \lmref{rhoGortho}:

\Lemma{rhoGorthomatr}
If $A$ and $B$ are matrix symmetry-adapted polynomials with the same mode $m$
 and $A(r,0) = f(r) \sopmatrix{1 & ± \I \\ ± \I & -1}$, $B(r,0) = g(r)  \sopmatrix{1 & ± \I \\ ± \I & -1}$ then
\[
\iip<A, Σ B>_w  = \iip<A,  BΣ>_w = \iip<A, Σ B Σ>_w
 = \iip<ΣA,  BΣ>_w   = \iip<ΣA,  ΣBΣ>_w   = 0.
\]

\Proof
Note that
\[
A(\bfx) = A(r \rho(θ) \bfe_1) = f(r) ρ(θ) \sopmatrix{ 1   & ±\I \\ ±\I & -1} ρ(-θ) \E^{\I m θ} =
f(r) \underbrace{\sopmatrix{1 & ±\I \\ ±\I &-1}}_{=:C} \E^{\I (m \mp 2) θ}.
\]
and similarly $B(\bfx) = g(r) C  \E^{\I (m \mp 2) θ}$. A direct calculation shows for $S := \mdiag(1,-1)$ that
\meeq{
0 =\ipF<C,SC> = \ipF<C,CS> = \ipF<C,SCS>  = \ipF<SC,CS> = \ipF<SC,SCS>.
}
The result follows by reducing the integrals using $Σ(r,0) = r^2 S$, e.g.,
\meeq{
\iip<A, Σ B>_w = \int_0^1 \int_0^{2\pi} \ipF<A(r \rho(θ) \bfe_1) , Σ(r \rho(θ) \bfe_1) B(r \rho(θ) \bfe_1)> w(r) r \D r \D\theta \ccr
= \int_0^1 \int_0^{2\pi} \ipF<f(r) C \E^{\I (m \mp 2)θ} , r^2 S g(r) C \E^{\I (m \mp 2)θ}> w(r) r \D r \D\theta \ccr
= 2π \int_0^1 \bar f(r) g(r) w(r) r^3 \D r \ipF< C,SC  > = 0.
}
\mqed

We can combine these to deduce a complete basis of homogeneous polynomials:

\Lemma{matrixhomogeneous} A symmetry-adapted basis of degree $n$ homogeneous matrix polynomials is,
for $n = 0$,
\begin{align*}
\underbrace{\sopmatrix{1 & -\I \\-\I & -1}}_{\rmY_{-2}}, \underbrace{\sopmatrix{1 & \I \\\I & -1}}_{\rmY_2}, \underbrace{\sopmatrix{1 &\I \\ -\I & 1}}_{Σ \rmY_0}, \underbrace{\sopmatrix{1 &-\I \\ \I & 1}}_{\rmY_0 Σ},
\end{align*}
for $n = 1$,
\[
\underbrace{\sopmatrix{1 & ±\I \\ ±\I & -1} (x±\I y)}_{\rmY_{±3}},
\underbrace{\sopmatrix{1 & ±\I \\ ±\I & -1} (x\mp\I y)}_{r^2 \rmY_{±1}},
\underbrace{\sopmatrix{1 & ±\I \\ \mp\I & 1} (x±\I y)}_{Σ \rmY_{±1}},
\underbrace{\sopmatrix{1 & \mp\I \\ ±\I & 1} (x±\I y)}_{ \rmY_{±1} Σ}
\]
 for $n = 2,4,\ldots$
\begin{align*}
	\rmY_{±(n+2)}, r^2 \rmY_{±n}, \ldots, r^n \rmY_{±2}, r^{n+2}  \rmY_0 , \\
	Σ \rmY_{±n}, r^2 Σ \rmY_{±(n-2)}, \ldots,  r^{n-2} Σ \rmY_{±2}, r^n Σ \rmY_0, \\
		 \rmY_{±n}Σ, r^2 \rmY_{±(n-2)} Σ, \ldots,  r^{n-2}  \rmY_{±2} Σ, r^n  \rmY_0 Σ, \\
		  Σ    \rmY_{±(n-2)} Σ, \ldots,		 r^{n-4} Σ    \rmY_{±2} Σ ,  r^{n-2} Σ    \rmY_0 Σ.
\end{align*}
and for $n = 3,5,\ldots$
\begin{align*}
	\rmY_{±(n+2)}, r^2 \rmY_{±n}, \ldots, r^{n+1}  \rmY_{±1}, \\
	Σ \rmY_{±n}, r^2 Σ \rmY_{±(n-2)}, \ldots,  r^{n-1} Σ \rmY_{±1}, \\
		 \rmY_{±n}Σ, r^2 \rmY_{±(n-2)} Σ, \ldots,  r^{n-1}  \rmY_{±1} Σ, \\
		  Σ    \rmY_{±(n-2)} Σ, \ldots  r^{n-3} Σ    \rmY_{±1} Σ.
\end{align*}

\Proof

For each $n$ we have the correct total of $4(n+1)$ functions so we need to only show linear independence. As in the vector case we know different choices of modes $m$ are automatically orthogonal with respect to any inner product  of the form $\iip<\cdot,\cdot>_w$ (\lmref{ortho}) thus we only need to show orthogonality for the same $n$ and $m$. This is a direct consequence of the previous lemma.

\mqed
.

\Section{ops} Vector and matrix analogues of Zernike polynomials.

We now turn our attention to orthogonal polynomials beginning with analogues of Zernike polynomials, that is,
we construct scalar, vector, and matrix symmetry-adapted orthogonal polynomials with respect to the weight $(1-r^2)^λ$.

\Subsection Scalar Zernike polynomials.

The generalised Zernike polynomials $\rmz_{mj}^{(λ)}$  defined in the introduction are  symmetry-adapted functions with mode $m$ that are orthogonal with respect to the inner product
\[
\ip<f,g>_λ := \iint_Ω \bar f(\bfx) g(\bfx) (1-r^2)^λ \D \bfx.
\]
Choosing the right ordering gives us a basis for degree $n$ polynomials:

\Lemma{ZernikeOrtho}
Symmetry-adapted OPs with respect to $(1-r^2)^λ$ of degree $n$ are given by, for $n$ even,
\begin{align*}
\rmz_{±n,0}^{(λ)}, \rmz_{±(n-2),1}^{(λ)}, \ldots,  \rmz_{±2,n/2-1}^{(λ)},\rmz_{0,n/2}^{(λ)}
\end{align*}
and for $n$ odd,
\begin{align*}
\rmz_{±n,0}^{(λ)}, \rmz_{±(n-2),1}^{(λ)}, \ldots, \rmz_{±1,(n-1)/2}^{s,(λ)}.
\end{align*}

\begin{proof}
This is standard (see \cite{DunklXu}) but we write it in a way analogous to the vector and matrix cases, in particular, using the
symmetry-adapted property. By \lmref{ortho}, $\rmz_{mj}^{(λ)}$ and $\rmz_{nk}^{(λ)}$ are orthogonal if $m ≠ n$. If $m = n$ and $k ≠ j$, we have
\meeq{
\ip<\rmz_{mk}^{(λ)}, \rmz_{mj}^{(λ)}>_λ =  \int _0^1\int_0^{2π} \bar \rmz_{mk}^{(λ)}(r ρ(θ) \vc e_1)  \rmz_{mj}^{(λ)}(r ρ(θ) \vc e_1)   \Dθ (1-r^2)^λ r \D r\ccr
= \int_0^{2π} \E^{-\I m θ} \int_0^1 \bar \rmz_{mk}^{(λ)}(r,0) \rmz_{mj}^{(λ)}(r,0) (1-r^2)^λ r \D r  \E^{\I m θ} \D θ \ccr
=  2π \int_0^1 P_k^{(λ,|m|)}(2r^2-1) P_j^{(λ,|m|)}(2r^2-1) (1-r^2)^λ  r^{2|m|+1}   \D r.
}
With the change of variables $2r^2-1 = τ$ (so that $r^2 = (τ+1)/2$ and $1-r^2 = (1-τ)/2$) we find
\begin{align*}
\int_0^1 & P_k^{(λ,|m|)}(2r^2-1) P_j^{(λ,|m|)}(2r^2-1) (1-r^2)^λ  r^{2|m|+1} \D r  \ccr
=  {1 \over 2^{λ+|m|+2}} \int_{-1}^1 P_k^{(λ,|m|)}(τ) P_j^{(λ,|m|)}(τ) (1-τ)^λ (1+τ)^{|m|}  \D τ = 0.
\end{align*}
\end{proof}

\begin{remark}
We can relate these to the notation in \cite{vasil2016tensor}: $Q_j^{λ,m}$ corresponds to a rescaled version of $\rmz_{mk}^{(λ)}$  whilst $\E^{\I m θ}  \langle λ, m, r|$ corresponds to a rescaled version of the (infinite) row-vector $\begin{pmatrix} \rmz_{m0}^{(λ)} & \rmz_{m0}^{(λ)} & \ldots \end{pmatrix}$.
\end{remark}

\subsection{Vector Zernike polynomials} \label{Section:VectorZernike}

We now extend the construction to vector orthogonal polynomials. A simple way to construct vector OPs is to represent each component by a scalar OP but this will not be symmetry-adapted. Instead, we want to take the appropriate linear combination of the symmetry-adapted homogeneous orthogonal polynomials to achieve orthogonality. The following does so:
\Definition{vectorZernike}
\begin{align*}
\bfz_{mj}^{(λ),1}(x,y) &:=   P_j^{(λ,|m|-1)} (2r^2-1)\bfy_m(x,y), \\
\bfz_{mj}^{(λ),2}(x,y) &:=    P_{j-1}^{(λ,|m|+1)}(2r^2-1) Σ(x,y)\bfy_m(x,y).
\end{align*}
where for $m=0$ we use the standard convention for Jacobi polynomials with parameters corresponding to non-integrable weights coming from the series expansion \cite[18.5.7]{DLMF}, see \cite[4.22.2]{szeg1939orthogonal}:
\begin{align*}
P_0^{(λ,-1)}(x) &:= 1, \qquad P_n^{(λ,-1)}(x) := (1+x) {n+λ \over 2n} P_{n-1}^{(λ,1)}(x),\qquad n ≠ 0.
\end{align*}

We can use these to construct a basis for degree $n$ vector orthogonal polynomials:

\Theorem{vectorOPs} Symmetry-adapted vector OPs  with respect to $(1-r^2)^λ$ of degree $n$ are  given by, for  $n$ even,
\begin{align*}
\bfz_{±(n+1),0}^{(λ),1},&\bfz_{±(n-1),1}^{(λ),1},\ldots, \bfz_{±3,(n-2)/2}^{(λ),1},\bfz_{±1,n/2}^{(λ),1}, \\
&\bfz_{±(n-1),1}^{(λ),2},\ldots, \bfz_{±3,(n-2)/2}^{(λ),2},\bfz_{±1,n/2}^{(λ),2},
\end{align*}
and, for $n$ odd,
\begin{align*}
\bfz_{±(n+1),0}^{(λ),1},&\bfz_{±(n-1),1}^{(λ),1},\ldots,\bfz_{±2,(n-1)/2}^{(λ),1}, \bfz_{0,(n+1)/2}^{(λ),1}, \\
&\bfz_{±(n-1),1}^{(λ),2},\ldots,\bfz_{±2,(n-1)/2}^{(λ),2},\bfz_{0,(n+1)/2}^{(λ),2}.
\end{align*}

\begin{proof}

We note there are precisely $2(n+1)$ polynomials of degree $n$ and hence we need to show orthogonality.
\lmref{ortho} and \lmref{rhoGortho} show orthogonality  apart from the same mode and same superscripts. We have
\meeq{
\ip<\bfz_{mk}^{(λ),ν}, \bfz_{mj}^{(λ),ν}>_λ =  \int _0^1\int_0^{2π} \bfz_{mk}^{(λ),ν}(r ρ(θ) \vc e_1)^\star  \bfz_{mj}^{(λ),ν}(r ρ(θ) \vc e_1)   \Dθ (1-r^2)^λ r \D r\ccr
= \int_0^{2π} \E^{-\I m θ} \underbrace{\int_0^1 \bfz_{mk}^{(λ),ν}(r,0)^\star \bfz_{mj}^{(λ),ν}(r,0) (1-r^2)^λ r \D r}_{=: σ_{mkj}^{(λ),ν}}  \E^{\I m θ}\D θ = 2π σ_{mkj}^{(λ),ν}.
}
We  find, using $\bfz_{mk}^{(λ),1}(r,0) = P_k^{(λ,|m|-1)}(2r^2-1) r^{|m|-1} \vectt[1,±\I]$, for
$m ≠ 0$ that
\meeq{
 σ_{mkj}^{(λ),1} = 2\int_0^1  P_k^{(λ,|m|-1)}(2r^2-1) P_j^{(λ,|m|-1)}(2r^2-1) (1-r^2)^λ r^{2|m|-1}   \D r \ccr
= {1 \over 2^{λ+|m|}} \int_{-1}^1  P_k^{(λ,|m|-1)}(τ) P_j^{(λ,|m|-1)}(τ)  (1-τ)^λ (τ+1)^{|m|-1}  \D τ  = 0,
}
when $k ≠ j$. When $m = 0$ we use, where $k,j ≥ 1$:
\begin{align*}
 \int_{-1}^1  &P_k^{(λ,-1)}(τ) P_j^{(λ,-1)}(τ)  (1-τ)^λ (τ+1)^{-1}   \D τ \ccr  =
 {(k+λ)(j+λ) \over 4 k j} \int_{-1}^1  P_{k-1}^{(λ,1)}(τ) P_{j-1}^{(λ,1)}(τ)   (1-τ)^λ (τ+1)  \D τ  = 0,
\end{align*}
when $k ≠ j$.

Using $Σ(r,0) \bfy_m(r,0)  = r^{|m|+1} \vectt[1,±\I]$ we find, where $k,j ≥ 1$,
\meeq{
 σ_{mkj}^{(λ),2} = 2\int_0^1  P_{k-1}^{(λ,|m|+1)}(2r^2-1) P_{j-1}^{(λ,|m|+1)}(2r^2-1) (1-r^2)^λ r^{2|m|+3}   \D r \ccr
=   {1 \over 2^{λ+|m|+2}} \int_{-1}^1  P_{k-1}^{(λ,|m|+1)}(τ) P_{j-1}^{(λ,|m|+1)}(τ)  (1-τ)^λ (τ+1)^{|m|+1}   \D τ  = 0.
}

\end{proof}

\begin{remark}
We can relate these to the notation in \cite{vasil2016tensor}: we have $\bfe_+ =  (\bfe_r + \I \bfe_θ)/\sqrt{2} =  r\bfy_0/\sqrt{2}$ and $\bfe_- = (\bfe_r - \I \bfe_θ)/\sqrt{2} = r^{-1}  Σ \bfy_0/\sqrt{2}$. For $m \geq 0$, $\E^{\I m θ}  \bfe_- \langle \lambda, m-1, r |$ corresponds to a rescaled version of   $\begin{pmatrix} \bfz_{m0}^{(λ),1} & \bfz_{m1}^{(λ),1} & \ldots \end{pmatrix}$ whilst  $\E^{\I m θ}  \bfe_+ \langle \lambda, m+1, r |$  corresponds to a rescaled version of   $\begin{pmatrix} \bfz_{m0}^{(λ),2} & \bfz_{m1}^{(λ),2} & \ldots \end{pmatrix}$.
\end{remark}

\Subsection Matrix Zernike polynomials.

We now construct matrix orthogonal polynomials for the weight $(1-r^2)^λ$:

\Definition{matrixZernike}
\begin{align*}
\rmZ_{mj}^{(λ),1} &:=   P_j^{(λ,|m|-2)}(2r^2-1)\rmY_m &\rmZ_{mj}^{(λ),2} &:=   P_{j-1}^{(λ,|m|)}(2r^2-1) Σ \rmY_m \\    \rmZ_{mj}^{(λ),3} &:=    P_{j-1}^{(λ,|m|)}(2r^2-1) \rmY_m  Σ, & \rmZ_{mj}^{(λ),4} &:=    P_{j-2}^{(λ,|m|+2)}(2r^2-1) Σ \rmY_m  Σ.
\end{align*}
where for $m=0$ we again use the standard convention coming from the series expansion \cite[18.5.7
]{DLMF}, which satisfies \cite[4.22.2]{szeg1939orthogonal}
\begin{align*}
P_0^{(λ,-2)}(x) &:= 1,\qquad
P_1^{(λ,-2)}(x) := {2 + λ + λ x \over 2},\\
 \qquad P_n^{(λ,-2)}(x) &:=  { (n+λ-1)(n+λ) \over 4 n(n-1)} (1+x)^2 P_{n-2}^{(λ,2)}(x),\qquad n > 1.
\end{align*}

Choosing the right parameters we can build a complete basis of symmetry-adapted matrix OPs:

\Theorem{ortho} Symmetry-adapted matrix OPs  with respect to $(1-r^2)^λ$ are, for $n =0$,
\[
\rmZ_{±2,0}^{(λ),1}, \rmZ_{0,1}^{(λ),2},  \rmZ_{0,1}^{(λ),3},
\]
for $n>0$ even,
\begin{align*}
\rmZ_{±(n+2),0}^{(λ),1},\rmZ_{±n,1}^{(λ),1},&\rmZ_{±(n-2),2}^{(λ),1},\ldots,\rmZ_{±2,n/2}^{(λ),1}, \rmZ_{0,(n+2)/2}^{(λ),1} \\
\rmZ_{±n,1}^{(λ),2},&\rmZ_{±(n-2),2}^{(λ),2},\ldots, \rmZ_{±2,n/2}^{(λ),2},\rmZ_{0,(n+2)/2}^{(λ),2}, \\
\rmZ_{±n,1}^{(λ),3},&\rmZ_{±(n-2),2}^{(λ),3},\ldots, \rmZ_{±2,n/2}^{(λ),3},\rmZ_{0,(n+2)/2}^{(λ),3}, \\
&\rmZ_{±(n-2),2}^{(λ),4},\ldots, \rmZ_{±2,n/2}^{(λ),4},\rmZ_{0,(n+2)/2}^{(λ),4},
\end{align*}
and, for $n$ odd,
\begin{align*}
\rmZ_{±(n+2),0}^{(λ),1},\rmZ_{±n,1}^{(λ),1},&\rmZ_{±(n-2),2}^{(λ),1},\ldots,\rmZ_{±3,(n-1)/2}^{(λ),1}, \rmZ_{±1,(n+1)/2}^{(λ),1} \\
\rmZ_{±n,1}^{(λ),2},&\rmZ_{±(n-2),2}^{(λ),2},\ldots, \rmZ_{±3,(n-1)/2}^{(λ),2},\rmZ_{±1,(n+1)/2}^{(λ),2}, \\
\rmZ_{±n,1}^{(λ),3},&\rmZ_{±(n-2),2}^{(λ),3},\ldots, \rmZ_{±3,(n-1)/2}^{(λ),3},\rmZ_{±1,(n+1)/2}^{(λ),3}, \\
&\rmZ_{±(n-2),2}^{(λ),4},\ldots, \rmZ_{±3,(n-1)/2}^{(λ),4},\rmZ_{±1,(n+1)/2}^{(λ),4}.
\end{align*}

\begin{proof}
We have precisely $4(n+1)$ polynomials of degree $n$ so we need to only show orthogonality.
We again only need to consider the bases corresponding to the same modes and superscripts using \lmref{ortho} and \lmref{rhoGorthomatr}.
Similar to the vector case we have
\meeq{
\iip<\rmZ_{mk}^{(λ),ν}, \rmZ_{mj}^{(λ),ν}>_λ =  \int _0^1\int_0^{2π} \ipF<\rmZ_{mk}^{(λ),ν}(r ρ(θ) \vc e_1), \rmZ_{mj}^{(λ),ν}(r ρ(θ) \vc e_1)>   \Dθ (1-r^2)^λ r \D r\ccr
= \int_0^{2π} \E^{-\I m θ} \underbrace{\int_0^1 \ipF<\rmZ_{mk}^{(λ),ν}(r,0), \rmZ_{mj}^{(λ),ν}(r,0)> (1-r^2)^λ r \D r}_{=: σ_{mkj}^{(λ),ν}}  \E^{\I m θ}\D θ = 2π σ_{mkj}^{(λ),ν}.
}
First consider $ν = 1$. Note that
\[
\rmZ_{mk}^{(λ),1}(r,0) = r^{|m|-2}P_k^{(λ,|m|-2)}(2r^2-1)  \underbrace{\sopmatrix{1 & ±\I \\ ±\I & -1}}_C
\]
and $\ipF<C,C> = 4$. If  $|m| ≥ 2$, or $|m| = 1$ and $k,j > 0$, or $m = 0$ and $k,j > 1$ we have
\meeq{
 σ_{mkj}^{(λ),1} = 4\int_0^1  P_k^{(λ,|m|-2)}(2r^2-1) P_j^{(λ,|m|-2)}(2r^2-1) (1-r^2)^λ r^{2|m|-3}   \D r \ccr
= {1 \over 2^{λ+|m|-2}} \int_{-1}^1  P_k^{(λ,|m|-2)}(τ) P_j^{(λ,|m|-2)}(τ)  (1-τ)^λ (τ+1)^{|m|-2}  \D τ  = 0,
}
when $k ≠ j$. Here, when $m = 0$ we use:
\begin{align*}
 \int_{-1}^1  &P_k^{(λ,-2)}(τ) P_j^{(λ,-2)}(τ) (1-τ)^λ  (τ+1)^{-2}   \D τ \ccr  =
  \hbox{const.} \int_{-1}^1  P_{k-2}^{(λ,2)}(τ) P_{j-2}^{(λ,2)}(τ) (1-τ)^λ (τ+1)^2    \D τ  = 0
\end{align*}
when $k ≠ j$. Now note when $ν = 2$ we have
\[
\rmZ_{mk}^{(λ),2}(r,0) = r^{|m|}P_{k-1}^{(λ,|m|)}(2r^2-1) S  C
\]
where $S = \mdiag(1,-1)$, and $\ipF<SC, SC> = 4$.
Then
\meeq{
 σ_{mkj}^{(λ),2} = 4\int_0^1  P_{k-1}^{(λ,|m|)}(2r^2-1) P_{j-1}^{(λ,|m|)}(2r^2-1) (1-r^2)^λ r^{2|m|+1}   \D r \ccr
= {1 \over 2^{λ+|m|}} \int_{-1}^1  P_{k-1}^{(λ,|m|)}(τ) P_{j-1}^{(λ,|m|)}(τ)  (1-τ)^λ (τ+1)^{|m|}  \D τ  = 0.
}
when $k ≠ j$. A similar argument shows $σ_{mkj}^{(λ),3} = 0$. Finally, when $ν = 4$ note that
\[
\rmZ_{mk}^{(λ),4}(r,0) = r^{|m|+2}P_{k-2}^{(λ,|m|+2)}(2r^2-1) S  C S
\]
but again  $\ipF<SCS, SCS> = 4$. Thus
\meeq{
 σ_{mkj}^{(λ),4} = 4\int_0^1  P_{k-2}^{(λ,|m|+2)}(2r^2-1) P_{j-2}^{(λ,|m|+2)}(2r^2-1) (1-r^2)^λ r^{2|m|+5}   \D r \ccr
= {1 \over 2^{λ+|m|+2}} \int_{-1}^1  P_{k-2}^{(λ,|m|+2)}(τ) P_{j-2}^{(λ,|m|+2)}(τ)  (1-τ)^λ (τ+1)^{|m|+2}  \D τ  = 0.
}

\end{proof}

\begin{remark}
For comparison, in the notation of \cite{vasil2016tensor} we have
\meeq{
\bfe_+ \bfe_+ \quad \hbox{which denotes}\quad \bfe_+ \bfe_+^\top  = \sopmatrix{ 1 & \I \\ \I & -1} {(x-\I y)^2 \over 2r^2} =  {r^2 Y_0 \over 2},   \ccr
\bfe_- \bfe_+ \quad \hbox{which denotes}\quad \bfe_- \bfe_+^\top  = \sopmatrix{ 1 & \I \\ -\I & 1} {1 \over 2} = {Σ Y_0  \over 2},\ccr
\bfe_+ \bfe_- \quad \hbox{which denotes}\quad \bfe_+ \bfe_-^\top  = \sopmatrix{ 1 & -\I \\ \I & 1} {1 \over 2} = {Y_0 Σ \over 2},\ccr
\bfe_- \bfe_- \quad \hbox{which denotes}\quad \bfe_- \bfe_-^\top  = \sopmatrix{ 1 & -\I \\ -\I & -1} {(x+\I y)^2 \over 2r^2} = {ΣY_0Σ \over 2r^2}.
}
We then have, for $m \geq 0$, $\E^{\I m θ}  \bfe_+ \bfe_+ \langle \lambda, m-2, r |$ corresponds to $\begin{pmatrix} \rmZ_{m0}^{(λ),1} & \rmZ_{m1}^{(λ),1} & \ldots \end{pmatrix}$,  $\E^{\I m θ}  \bfe_-  \bfe_+ \langle \lambda, m, r |$  corresponds to    $\begin{pmatrix} \rmZ_{m0}^{(λ),2} & \rmZ_{m1}^{(λ),2} & \ldots \end{pmatrix}$,  $\E^{\I m θ}  \bfe_+  \bfe_- \langle \lambda, m, r |$  corresponds to   $\begin{pmatrix} \rmZ_{m0}^{(λ),3} & \rmZ_{m1}^{(λ),3} & \ldots \end{pmatrix}$,
and finally
  $\E^{\I m θ}  \bfe_-  \bfe_- \langle \lambda, m+2, r |$  corresponds to  $\begin{pmatrix} \rmZ_{m0}^{(λ),4} & \rmZ_{m1}^{(λ),4} & \ldots \end{pmatrix}$.

\end{remark}

\Section{vectoruniOPs} Special class of univariate vector orthogonal polynomials.

To go beyond simple ultraspherical inner products and allow more general equivariant symmetric positive definite matrix
weights we will relate vector OPs on the disk to a special class of univariate vector OPs. We first define a set of diagonal
matrix and vector polynomials whose constant term in a monomial expansion has a special form:

\Definition{rings}
Denote the commutative ring of diagonal matrix polynomials whose constant term in a monomial expansion is the identity by:
\[
 \calD := \set{c_0 I + \sum_{k=1}^p \sopmatrix{c_k \\ & d_k} t^k : c_k,d_k \in \bbR } \subset \bbR[t]^{2 \times 2}.
\]
Denote the module over $\calD$ of vector polynomials whose constant term in a monomial expansion have the same entry in each component by:
\begin{equation}\label{Equation:calM}
\calM := \calD \Vectt[1,1] = \set{c_0 \Vectt[1,1] + \sum_{k=1}^p \Vectt[c_k,d_k] t^k : c_k,d_k \in \bbR} \subset \bbR[t]^2.
\end{equation}

We want to consider orthogonal polynomials in $\calM$ with respect to diagonal inner products of the form
\[
\ip<\bff,\bfg>_V := \int_0^1 \bff(t)^\top \underbrace{\sopmatrix{
α(t) + t β(t) \\ & α(t) - t β(t)
}}_{=: V(t)} \bfg(t) \dt
\]
where $V : [0,1] \rightarrow \bbR^{2 \times 2}$ is symmetric positive definite almost everywhere,
and for simplicity we assume $α$ and $β$ are polynomial, that is, $V \in \calD$. We will see in \secref{matrixweights} that we can relate symmetry-adapted orthogonal polynomials with respect to an equivariant matrix weight in the disk with mode $m ≠ 0$ to
 vector orthogonal polynomials in $\calM$.

Consider orthogonal polynomials in $\calM$ with respect to $V$  arising from orthogonalising graded polynomials with the following ordering of monomials:
\[
\Vectt[1,1], \Vectt[1,-1]t, \Vectt[1,1] t,  \Vectt[1,-1]t^2, \Vectt[1,1] t^2, \cdots.
\]
We will denote a family of such orthogonal polynomials by
\begin{equation}\label{Equation:OPsinM}
\bfp_0^{V,1}, \bfp_1^{V,2}, \bfp_1^{V,1},  \bfp_2^{V,2}, \bfp_2^{V,1}, \ldots.
\end{equation}
Here $\bfp_n^{V,1/2}$ are a basis of degree $n$ polynomials in $\calM$, and we place the $\bfp_n^{V,2}$ term first so that the total basis is interlacing the bases $\{\bfp_n^{V,1}\}$ (whose leading order behaviour according to the grading is proportional to $\vectt[1,1] t^n$)  and $\{\bfp_n^{V,2}\}$ (whose leading order behaviour is proportional to $\vectt[1,-1] t^n$).
Note that we are not imposing a specific normalisation constant in the following, though these will be uniquely defined if we impose that they are monic polynomials, that is, that the highest order coefficient according to the above ordering is $1$.
It is convenient to also work with degree $n$ polynomials grouped together as a matrix polynomial,  which we denote
\[
\bfP_n^V(t) := \begin{cases} \pr({\bfp_0^{V,1}(t)}) & n = 0 \\
				\bvect[\bfp_n^{V,2}(t),\bfp_n^{V,1}(t)] & n ≠ 0 \end{cases} \in \bbR^{2 \times \min(n+1,2)}.
\]

A basic feature of orthogonal polynomials is the existence of three-term recurrences, which in the case of orthonormal
polynomials correspond to Jacobi
matrices (symmetric tridiagonal
matrices). These extend to multivariate and vector orthogonal polynomials in the form of symmetric
block tridiagonal matrices. Note that the ring $\calD$ has two generators: $t$ and
$t S$ where $S = \mdiag(1,-1)$. We get two associated block three-term recurrences:

\Lemma{VecJacobi} For $V \in \calD$ that is symmetric positive define almost everywhere, the matrix polynomials $\bfP_n^V$ whose columns are a basis of degree $n$ vector OPs in $\calM$   have two block three-term recurrences:
\meeq{
	t\bfP_0^V =  \bfP_0^V A_0^1 + \bfP^V_1 B_0^1,  \ccr
t \bfP_n^V =   \bfP_{n-1}^V C_n^1 + \bfP_n^V A_n^1 + \bfP^V_{n+1} B_n^1, \qquad n > 0,  \ccr
t  S \bfP^V_0 =  \bfP^V_0 A_0^2 + \bfP^V_1 B_0^2, \ccr
t  S \bfP^V_n =   \bfP^V_{n-1} C_n^2 + \bfP^V_n A_n^2 + \bfP^V_{n+1} B_n^2, \qquad n >0.
}
where $S = \mdiag(1,-1)$, $A_0^α \in \bbR^{1 \times 1}$, $B_0^α \in \bbR^{2 \times 1}$, $C_0^α \in \bbR^{1 \times 2}$, and
 $A_n^α,B_n^α,C_n^α \in \bbR^{2 \times 2}$, $n > 0$.
When  $\bfP_n^V$ are orthonormal then $(B_n^α)^\top = C_{n+1}^α$ and $A_n^α = (A_n^α)^\top$.

\Proof

The proof is identical to the proof for three-term recurrence for univariate polynomials since $t$ and $t S$ are both self-adjoint operators that increase the polynomial degree by 1 and
commute with $V \in \calD$. In particular, for $\bff,\bfg \in \calM$ we have
\meeq{
	\ip<\bff, t S \bfg>_V = \int_0^1 \bff(t)^\top V(t)tS \bfg(t) \dt = \int_0^1 (t S\bff(t))^\top V(t) \bfg(t) \dt = \ip<t S\bff, \bfg>_V
}
and similarly $\ip<\bff, t\bfg>_V = \ip<t\bff, \bfg>_V$.
Note that any degree $n$ polynomial $\bff \in \calM$ can be expanded as
\[
\bff =   \sum_{k=0}^n \bfP_k^V \bfc_k
\]
where
\[
	\bfc_k = \ip<\bfP_k^V, \bfP_k^V>^{-1} \ip<\bfP_k^V, \bff> \in \bbR^{\min(k+1,2)}.
\]
Thus by expanding each column we can write
\meeq{
t \bfP^V_n =  \sum_{k=0}^{n+1} \bfP_k^V Γ_{kn}^1\qqand
t S \bfP^V_n =  \sum_{k=0}^{n+1} \bfP_k^V Γ_{kn}^2.
}
where
\meeq{
	Γ_{kn}^1 = \ip<\bfP_k^V, \bfP_k^V>^{-1} \ip<\bfP_k^V, t \bfP_n^V> \in \bbR^{\min(k+1,2) \times 2}, \ccr
	Γ_{kn}^2 = \ip<\bfP_k^V, \bfP_k^V>^{-1} \ip<\bfP_k^V, t S\bfP_n^V> \in \bbR^{\min(k+1,2) \times 2}.
}
For $k ≤ n-2$ we know
\[
\ip<\bfP_k^V, t \bfP_n^V> =  \bigl<\!\!\!\!\!\!\!\!\!\!\!\!\underbrace{t \bfP_k^V}_{\hbox{degree $k+1 < n$}}\!\!\!\!\!\!\!\!\!\!\!\!, \bfP_n^V \bigr> = 0
\]
and similarly $\ip<\bfP_k^V, t S \bfP_n^V> = 0$ as $\bfP_n^V$ is orthogonal to all lower degree polynomials in $\calM$.
The three-term recurrence then follows by writing $C_n^ν := Γ_{n-1,n}^ν$, $A_n^ν := Γ_{n,n}^ν$, $B_n^ν := Γ_{n+1,n}^ν$.

Finally, note that for matrix polynomials $F,G$ whose columns are in $\calM$ we have
\[
\ip<F,G>_V^\top = \pr({\int_Ω F(t)^\top V(t) G(t) \D \bfx})^\top = \int_Ω G(t)^\top V(t) F(t) \D \bfx = \ip<G,F>_V.
\]
Thus when $\bfP_n^V$ are orthonormal we have
\meeq{
B_n^2 = \ip<\bfP_{n+1}, t  S \bfP_n > = \ip< t  S\bfP_{n+1}, \bfP_n > = \ip<\bfP_n,t  S\bfP_{n+1} >^\top = (C_n^2)^\top,
}
and by the same logic $B_n^1 = (C_n^1)^\top$.

\mqed

Associated with the three-term recurrences are two block Jacobi-like operators:
\begin{align}
t \bvect[\bfP_0^V,  \bfP_1^V, …] &= \bvect[\bfP_0^V,  \bfP_1^V, …] \underbrace{
\sopmatrix{A_0^1 & C_1^1\\
B_0^1 & A_1^1 & C_2^1 \\
& B_1^1 & A_2^1 & \ddots \\
&&\ddots & \ddots
}
}_{=:T_1}, \label{Equation:JacobiOp} \ccr
t S \bvect[\bfP_0^V,  \bfP_1^V, …] = \bvect[\bfP_0^V,  \bfP_1^V, …] \underbrace{
\sopmatrix{A_0^2 & C_1^2\\
B_0^2 & A_1^2 & C_2^2 \\
& B_1^2 & A_2^2 & \ddots \\
&&\ddots & \ddots
}
}_{=:T_2}. \notag
\end{align}
Note that $T_1^2 = -T_2^2$, though we will not use this property.

In \cite{gautschi1970construction} orthogonal polynomials with respect to polynomial weight modifications were deduced from Cholesky factorisations
(this was generalised to rational modifications in \cite{gutleb2024polynomial}).
We will use the same procedure here, though to avoid the need for orthonormal polynomials we use an LU factorisation instead of a Cholesky factorisation. In particular for a weight modification $M \in \calD$ which we write as
\[
M(t) = \sopmatrix{\tilde α(t) + t \tilde β(t) \\
& \tilde α(t) - t \tilde β(t)},
\]
and $\bfP^V := \bvect[\bfP_0^V,  \bfP_1^V, …]$ we can represent multiplication by $M$ in terms of the Jacobi-like operators:
\[
M(t) \bfP^V = \bfP^V\br[ \tilde α(T_1) + T_2 \tilde  β(T_1)].
\]
We can use this to construct orthogonal polynomials by computing a LU factorisation:

\begin{theorem}\label{Theorem:LU}
Suppose we have an LU factorisation:
\[
\tilde α(T_1) + T_2 \tilde  β(T_1) = LU
\]
where $L$ and $U$ are invertible. 	Vector OPs with respect to $M V$ in $\calM$ are given by the columns of $\bfP^V U^{-1}$.

\end{theorem}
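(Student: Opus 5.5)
The plan is to adapt Gautschi's Cholesky argument: show that the columns of $\bfQ := \bfP^V U^{-1}$ span the right graded spaces in $\calM$, and that they are orthogonal with respect to $MV$. Write $J := \tilde α(T_1) + T_2 \tilde β(T_1)$, so that $M\bfP^V = \bfP^V J = \bfP^V L U$.

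The first step is the structure of the factors. I take $U$ upper triangular and $L$ lower triangular, both with respect to the scalar ordering of columns of $\bfP^V$ (namely $\bfp_0^{V,1},\bfp_1^{V,2},\bfp_1^{V,1},\ldots$). I also assume they are invertible, meaning all diagonal entries are nonzero, as in the hypothesis. Then $U^{-1}$ is upper triangular with nonzero diagonal. Hence the $k$-th column of $\bfQ$ is a linear combination of the first $k$ columns of $\bfP^V$ with a nonzero coefficient on the $k$-th. So $\bfQ$ lies in $\calM$ and has exactly the same graded leading-term structure as $\bfP^V$ in the ordering $\vectt[1,1],\vectt[1,-1]t,\vectt[1,1]t,\ldots$. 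Only finitely many entries enter each column, so there are no convergence issues.

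The second step, which is the heart of the proof, is orthogonality. The identity $M\bfP^V = \bfP^V J$ holds column by column as finite sums, since $J$ is banded (each $T_\alpha$ is block tridiagonal and $\tilde α,\tilde β$ are polynomials). Multiplying on the right by $U^{-1}$ gives
\[
M \bfQ = M\bfP^V U^{-1} = \bfP^V L.
\]
Because $L$ is lower triangular, the $k$-th column of $M\bfQ$ lies in the span of columns $k, k+1, \ldots$ of $\bfP^V$; again each column is a finite sum by bandedness. Now take $j < k$. The $j$-th column $\bfq_j$ of $\bfQ$ lies in the span of the first $j$ columns of $\bfP^V$. Since $M$ is diagonal and commutes with $V$, we get
\[
\ip<\bfq_j,\bfq_k>_{MV} = \int_0^1 \bfq_j^\top V M \bfq_k \dt = \ip<\bfq_j, M\bfq_k>_V.
\]
This vanishes because $\bfP^V$ is $V$-orthogonal and the index sets $\{1,\ldots,j\}$ and $\{k,k+1,\ldots\}$ are disjoint. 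Symmetry of the inner product handles $j>k$. Finally, the graded structure from the first step shows that $\bfQ$ is a basis of OPs in $\calM$ for $MV$ with the required ordering.

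The main obstacle I expect is bookkeeping rather than analysis. First, one must interpret the LU factorisation of an infinite banded operator carefully: $L$ is lower triangular, $U$ is upper triangular, and $U^{-1}$ is computed within the upper-triangular algebra, where inverses exist column by column. Second, one must check that the scalar ordering used for triangularity matches the degree grading of \eqref{Equation:OPsinM}, including the $1\times1$ block at $n=0$. Third, the representation $M\bfP^V = \bfP^V J$ relies on $t$ and $tS$ commuting on $\calM$, so that polynomials in $T_1,T_2$ realise multiplication by $\tilde α(t)+tS\tilde β(t)$; this follows from \lmref{VecJacobi}. If positive definiteness of $MV$ is wanted, it is inherited from the assumption that the result is a valid weight. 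Otherwise only nondegeneracy is used, and that is exactly what invertibility of $L$ and $U$ encodes.
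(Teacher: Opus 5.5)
Your proof is correct, but it takes a different route from the paper's.

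\textbf{The paper's route.} It first rescales to orthonormal polynomials $\bfQ^V = \bfP^V D$. It notes that $D^{-1}(\tilde\alpha(T_1)+T_2\tilde\beta(T_1))D$, the matrix of multiplication by $M$ in that basis, is symmetric positive definite and banded. It takes a Cholesky factorisation $R^\top R$ of this matrix and shows $\bfQ^V R^{-1}$ is orthonormal for $MV$ via $\langle \bfQ^V R^{-1}, \bfQ^V R^\top\rangle_V = R^{-\top}R^\top = I$. Only then does it reach your $U$, by invoking uniqueness of LU factorisations up to diagonal scaling, $U = \tilde D R D^{-1}$.

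\textbf{Your route.} You work with the LU factors directly. From $M\bfP^V U^{-1} = \bfP^V L$, the Gram matrix $\langle \bfP^V U^{-1}, M\bfP^V U^{-1}\rangle_V = U^{-\top}\Lambda L$ is lower triangular, where $\Lambda$ is the diagonal Gram matrix of $\bfP^V$. It is also symmetric, since $M$ is self-adjoint for $V$, so it is diagonal. Your index-set argument is exactly this.

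\textbf{What each buys.} Your argument is more elementary and slightly more general. It needs only nonzero pivots, which also give nonzero norms $(U^{-1})_{kk}\Lambda_{kk}L_{kk}$. It does not need positive definiteness of $M$, the existence of Cholesky factorisations of infinite banded matrices (which the paper defers to a footnote and a reference), or uniqueness of LU. The paper's route ties directly to Gautschi's Cholesky construction and yields an explicit orthonormal family $\bfQ^V R^{-1}$. Its positive-definiteness assumption also guarantees that the factorisation in the hypothesis actually exists.
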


\begin{proof}
Consider orthonormal polynomials which are a diagonal rescaling of $\bfP^V$,  that is we write $\bfQ^V:=\bfP^V D$ for a diagonal matrix $D = \mdiag(\|\bfp_0^{V,1}\|_V^{-1}, \|\bfp_1^{V,2}\|_V^{-1}, \|\bfp_1^{V,1}\|_V^{-1}, \ldots)$.
We have
\[
M \bfQ^V = \bfQ^V  D^{-1} (\tilde α(T_1) + T_2 \tilde  β(T_1)) D.
\]
The following is a symmetric positive definite banded matrix\footnote{It is an infinite matrix but bandedness ensures that finite-dimensional Cholesky factorisation results still apply, see \cite{gutleb2024polynomial}.} and therefore has a Cholesky factorisation:
\[
D^{-1}( \tilde α(T_1) + T_2 \tilde  β(T_1)) D = R^\top R,
\]
i.e., $M \bfQ^V = \bfQ^V R^\top R$. We then have
\[
\ip<\bfQ^V R^{-1}, \bfQ^V R^{-1}>_{MV} = \ip<\bfQ^V R^{-1}, M \bfQ^V R^{-1}>_V
= \ip<\bfQ^V R^{-1}, \bfQ^V R^\top>_V
=  R^{-\top} R^\top =I,
\]
thus $\bfQ^V R^{-1}$ are orthonormal with respect to $MV$.

Note that
\[
\tilde α(T_1) + T_2 \tilde  β(T_1) = \underbrace{D R^\top}_{\hbox{Lower triangular}} \times \underbrace{R D^{-1}}_{\hbox{Upper triangular}}.
\]
But LU factorisations are unique up to diagonal scaling, thus we know there exists a diagonal matrix $\tilde D$ such that $U = \tilde D R D^{-1}$. Thus we have
\[
\bfP^V U^{-1} = \bfQ^V D^{-1} D R^{-1}  \tilde D^{-1} = \bfQ^V  R^{-1}  \tilde D^{-1},
\]
i.e., the columns of $\bfP^V U^{-1}$ are rescaled orthonormal polynomials and thus orthogonal.

\end{proof}

\Subsection Scaled identity weights.

We now consider some special cases, beginning with the case where $β = 0$, so that
$
V(t) = α(t) I.
$
In this case we can construct vector OPs in $\calM$ directly in terms of standard univariate OPs,
in particular we have the following for Jacobi weights:

\Proposition{jacobiWt}
\begin{align*}
\bfp_n^{(a,b),1}(t) &:= P_n^{(a,b)}(2t-1) \Vectt[1, 1], \\
\bfp_n^{(a,b),2}(t) &:= t P_{n-1}^{(a,b+2)}(2t-1) \Vectt[1,-1], \qquad n ≥ 1
\end{align*}
are OPs with respect $(1-t)^a t^b I$ in $\calM$.

\Proof
Follows from direct inspection: if $k ≠ j$ we have
\meeq{
\int_0^1 \bfp_k^{(a,b),1}(t)^\top \bfp_j^{(a,b),1}(t) (1-t)^a t^b \D t = 2
\int_0^1 P_k^{(a,b)}(2t-1)P_j^{(a,b)}(2t-1)(1-t)^a t^b \D t = 0, \ccr
\int_0^1 \bfp_k^{(a,b),2}(t)^\top \bfp_j^{(a,b),2}(t) (1-t)^a t^b \D t = 2
\int_0^1 P_{k-1}^{(a,b+2)}(2t-1)P_{j-1}^{(a,b+2)}(2t-1)(1-t)^a t^{b+2} \D t = 0, \ccr
\int_0^1 \underbrace{\bfp_k^{(a,b),1}(t)^\top \bfp_j^{(a,b),2}(t)}_{=0} (1-t)^a t^b \D t = 0.
}

\mqed

We will only use the case where $a = 0$.
For this special case we can deduce the three-term recurrences explicitly:

\Lemma{threetermJacobi}
For
\[
\bfP_n^{(a,b)} := \begin{cases} \bfp_0^{(a,b),1} & n = 0 \\ \bvect[\bfp_n^{(a,b),2}, \bfp_n^{(a,b),1}] & n ≥ 1 \end{cases}
\]
we have the two block-three term recurrences
\meeq{
	t \bfP_0^{(0,b)} =  \bfP_0^{(0,b)}A_0^{1,(0,b)} +  \bfP_1^{(0,b)}B_0^{1,(0,b)}, \ccr
	t \bfP_n^{(0,b)} =  \bfP_{n-1}^{(0,b)}C_n^{1,(0,b)} + \bfP_{n}^{(0,b)}A_n^{1,(0,b)} +  \bfP_{n+1}^{(0,b)}B_n^{1,(0,b)}, \ccr
	t S \bfP_0^{(0,b)} = \bfP_0^{(0,b)}A_0^{2,(0,b)} +  \bfP_1^{(0,b)}B_0^{2,(0,b)}, \ccr
	t S \bfP_n^{(0,b)} =  \bfP_{n-1}^{(0,b)}C_n^{2,(0,b)} + \bfP_{n}^{(0,b)}A_n^{2,(0,b)} +  \bfP_{n+1}^{(0,b)}B_n^{2,(0,b)},
}
where
\meeq{
A_0^{1,(0,b)} = {b+1 \over b+2}, \qquad B_0^{1,(0,b)} = {1 \over b+2} \Vectt[0,1], \qquad C_1^{1,(0,b)} = {b+1 \over (b+2)(b+3)} \sopmatrix{ 0 & 1}, \ccr
A_n^{1,(0,b)} = {1 \over (2n+b)(2n+b+2)} \sopmatrix{b^2+b(2n+3)+2(n^2+n+1)\\ & b(b+1)+2bn+2n(n+1)}, \ccr
B_n^{1,(0,b)} = {1 \over (2n+b+1)(2n+b+2)} \sopmatrix{n(n+b+2)\\&(n+1)(n+b+1)}, \ccr
C_n^{1,(0,b)} = {1 \over (2n+b)(2n+b+1)} \sopmatrix{(n-1)(n+b+1)\\ &  n (n+b)},  \ccr
A_0^{2,(0,b)} = 0, \qquad B_0^{2,(0,b)} = \Vectt[1,0], \qquad  C_1^{2,(0,b)} = {b+1 \over b+3 } \sopmatrix{ 1& 0}, \ccr
A_n^{2,(0,b)} = {2n(n+b+1) \over (2n+b)(2n+b+2)}\sopmatrix{0 & 1 \\ 1 & 0}, \ccr
B_n^{2,(0,b)} = {1 \over (2n+b+1)(2n+b+2)} \sopmatrix{0 &  (n+b+1)(n+b+2)\\ n(n+1)  & 0} , \ccr
C_n^{2,(0,b)} ={1  \over (2n+b)(2n+b+1)} \sopmatrix{0 & (n-1)n \\ (n+b)(n+b+1) & 0},
}
where $n >1$.

\Proof

The multiplication by $t$ recurrences follow from the standard three-term recurrence for Jacobi polynomials \cite[18.9.2]{DLMF}.  The remaining recurrences follow from the lowering and raising relationships \cite[(18.9.5–6)]{DLMF}.

\mqed


\Subsection A special Jacobi-like matrix weight.

We  consider a specific weight which will prove essential to constructing a basis of  polynomials which are normal at the boundary of the disk. In particular, consider a matrix analogue of a Jacobi weight:
\[
V^{(b)}(t) := \sopmatrix{1 \\ & 1-t} t^b
= \sopmatrix{α(t) + tβ(t)\\ &α(t)-tβ(t)} t^b
\]
for $α(t) = 1-t/2$ and $β(t) = 1/2$.
This weight will prove important because it vanishes only in the second component at $t = 1$.
This vanishing property will  correspond to the tangential component vanishing when we relate these orthogonal polynomials to their multivariate counterparts on the disk.

We claim that the following gives an explicit construction of  vector OPs in $\calM$ with respect to the weight $V^{(b)}$:

\Definition{normalop} Define
\begin{align*}
	\weirdrat &:=	10n^2 + n(13b+19)+ 4(b+1)(b+2), \\
	\bfv_n^{(b)}(t) &:= \Vectt[(b+1)  (1-t)  P_n^{(1,b+1)}(2t-1) -2t  (n+1)  P_n^{(0,b+2)}(2t-1),
	(b+1)  P_n^{(1,b+1)}(2t-1)], \\
\bfq_n^{(b),1}(t) &:= {2n+b+1 \over (n+1) \weirdrat} \br[{(2n+b+2)(2n+b+3) P_n^{(1,b)}(2t-1) \Vectt[1-t,  1]-(n+b+2) \bfv_n^{(b)}(t)}], \\
\bfq_{n}^{(b),2}(t) &:=
- {\bfv_{n-1}^{(b)}(t) \over n} - \bfq_n^{(b),1}(t).
\end{align*}

Note that the normalisation constant is chosen because it will  lead to very simple expressions relating these to the gradient of weighted Zernike polynomials. We first show these are indeed polynomials of degree $n$, and also compute the leading order constants:

\Proposition{leadingorder}
\meeq{
\bfq_n^{(b),1}(t) = {(2n+b+1)! \over n! (n+b+1)! \weirdrat} \Vectt [2n^2+(5b+7)n+2b(b+3)+4,2(2n+b+1)(2n+b+2)]  t^n + O(t^{n-1}), \ccr
\bfq_n^{(b),2}(t) =  {2 (2n+b+2)! (2n+b+1) \over n! (n+b+1)! \weirdrat}   \Vectt[1,-1] t^n + O(t^{n-1}).
}

\begin{proof}
From \cite[§18.3]{DLMF} we know
\[
P_n^{(a,b)}(x) = {(2n+a+b)! \over 2^n n! (n+a+b)!} \br[x^n + {n(a-b) \over 2n+a+b} x^{n-1} +  O(x^{n-2})]
\]
and it follows that
\[
\bfv_n^{(b)}(t) =  {(2n+b+2)! \over n! (n+b+2)!}  \br[{-\Vectt[2n+b+3,0] t^{n+1} + \Vectt[{(n+1)(n+b+2)(2n+b+1) \over 2n+b+2}, b+1] t^n + O(t^{n-1})}].
\]
The proposition then follows from the definition. In particular, the term of order $t^{n+1}$ in $\bfv_n^{(b)}$ cancels with that of $\Vectt[(1-t) P_n^{(1,b)}(2t-1),0]$, leaving a degree $n$ polynomial.

\end{proof}

To show these are indeed orthogonal with respect to $V^{(b)}$ we will deduce lowering and raising operators
between $\bfq_n^{(b),j}$ and the previously defined  $\bfp_n^{(0,b),j}$. These are inferred from simple recurrence relationships:

\Lemma{NormalRaising}
Using the convention that $\bfq_{-1}^{(b),1} = \bfq_{-1}^{(b),2} = \bfq_0^{(b),2} = 0$
we have:
\meeq{
  \bfp_n^{(0,b),2} =
 -
{(n-1)(n+b+1) \over 2(2n+b)(2n+b+1)} \bfq_{n-1}^{(b),2} + {2 (n+b)(n+b+1) \over (2n+b+1)^2} \bfq_{n-1}^{(b),1} \\ &\qquad
+ {n\weirdrat \over 2 (2n+b+1)^2 (2n+b+2)} \bfq_n^{(b),2},\ccr
 \bfp_n^{(0,b),1} =  {(n-1)n \over 2(2n+b)(2n+b+1)} \bfq_{n-1}^{(b),2} -{2n(n+b) \over (2n+b+1)^2} \bfq_{n-1}^{(b),1}
 \\&\qquad
+  {n (n+b+1)(6n+3b+5) \over 2 (2n+b+1)^2 (2n+b+2)} \bfq_n^{(b),2} + {2(n+b+1) \over 2n+b+1} \bfq_n^{(b),1}, \ccr
 \weirdrat \sopmatrix{1 \\ & 1-t} \bfq_n^{(b),2}  = \weirdrat \bfp_n^{(0,b),2} + (n+b+1)(6n+3b+5) \bfp_n^{(0,b),1}
 \\
&\qquad -(n+b+2)(2n+b+1) \bfp_{n+1}^{(0,b),2}+(n+1) (2n+b+1) \bfp_{n+1}^{(0,b),1}, \ccr
{\weirdrat \over 2n+b+1} \sopmatrix{1 \\ & 1-t} \bfq_n^{(b),1}  =
(2n+b+3) \bfp_n^{(0,b),1} + (n+b+2) \bfp_{n+1}^{(0,b),2}
- (n+1) \bfp_{n+1}^{(0,b),1}.
}

\Proof
The lemma follows from using  \cite[(18.9.5–6)]{DLMF} to expand both the left- and right-hand sides into the basis $P_n^{(0,b+1)}(2t-1)$.
\mqed

\lmref{NormalRaising} encodes the definition of lower/upper block bidiagonal operators $L^{(b)}/R^{(b)}$
so that, for
\[
\bfQ_n^{(b)} := \begin{cases} \bfq_0^{(b),1} & n = 0 \\ \bvect[\bfq_n^{(b),2}, \bfq_n^{(b),1}] & n ≥ 1 \end{cases},
\]
we have
\meeq{
\bvect[\bfP_0^{(0,b)}, \bfP_1^{(0,b)},\ldots] = \bvect[\bfQ_0^{(b)}, \bfQ_1^{(b)},\ldots] \underbrace{ \sopmatrix{R_{00}^{(b),0} & R_{01}^{(b),1} \\
  & R_{11}^{(b),0} & \ddots \\
  & &\ddots}}_{=:R^{(b)}} \ccr
\sopmatrix{1 \\ & 1-t}  \bvect[\bfQ_0^{(b)}, \bfQ_1^{(b)},\ldots] = \bvect[\bfP_0^{(0,b)}, \bfP_1^{(0,b)},\ldots]
\underbrace{\sopmatrix{L_{00}^{(b),0} \\ L_{10}^{(b),1} & L_{11}^{(b),0} \\ &\ddots & \ddots}}_{ =:L^{(b)} }.
}
In particular, the recurrences tell us the entries of the blocks:
\meeq{
R_{00}^{(b)} = \Vectt[2], \qquad R_{01}^{(b)} = {2 (b+1) \over (b+3)^2}  \sopmatrix{b+2 & -1}, \ccr
R_{nn}^{(b)} = \sopmatrix{{n \over 2 (2n+b+1)^2 (2n+b+2)} \\ &  {2 (n+b+1) \over 2n+b+1}}   \sopmatrix{\weirdrat & (n+b+1)(6n+3b+5) \\ & 1}, \ccr
R_{n,n+1}^{(b)}  = \sopmatrix{{n \over 2 (2n+b+2)(2n+b+3)} \\ & {2(n+b+1) \over (2n+b+3)^2}} \sopmatrix{-1 & 1 \\
	1 & -1} \sopmatrix{n+b+2 \\ & n+1}, \ccr
L_{00}^{(b)} = {b+3 \over 4(b+2)} \Vectt[1],\qquad L_{10}^{(b)} = {1 \over 4 (b+2)} \Vectt[b+2,-1], \ccr
L_{nn}^{(b)} = \sopmatrix{1 \\ & {1 \over \weirdrat}} \sopmatrix{1 \\ (n+b+1)(6n+3b+5) & (2n+b+1)(2n+b+3)}, \ccr
L_{n+1,n}^{(b)} = {2n+b+1 \over \weirdrat} \sopmatrix{n+b+2 \\ & n+1} \sopmatrix{-1 & 1 \\ 1 & -1},
}
where $n > 0$.

We can relate the blocks of these raising and lowering operators to the blocks of the Jacobi-like operators of $\bfP_n$ deduced in \lmref{threetermJacobi} as follows:

\Lemma{ULcriteria}
For $n > 0$ we have
\meeq{
I + {A_0^{2,(0,b)} - A_0^{1,(0,b)} \over 2} = L_{00}^{(b)} R_{00}^{(b)}, \qquad
I + {A_n^{2,(0,b)} - A_n^{1,(0,b)} \over 2} = L_{nn}^{(b)} R_{nn}^{(b)} + L_{n,n-1}^{(b)} R_{n-1,n}^{(b)} \ccr
{B_{n-1}^{2,(0,b)}-B_{n-1}^{1,(0,b)} \over 2} =L_{n,n-1}^{(b)} R_{n-1,n-1}^{(b)} \qquad
{C_n^{2,(0,b)}-C_n^{1,(0,b)} \over 2} = L_{n-1,n-1}^{(b)} R_{n-1,n}^{(b)}.
}

\begin{proof}
	This follows from manipulating rationals.
\end{proof}

The above leads to a proof that $\bfq_n^{(b),ν}$ are indeed orthogonal polynomials in $\calM$ with respect to the weight $V^{(b)}$:

\Theorem{normalortho}
	$\bfq_n^{(b),ν}$ are orthogonal with respect $V^{(b)}(t) = \sopmatrix{1 \\ & 1-t} t^b$.

\begin{proof}
Using the Jacobi operators $T_ν$ in \eqref{Equation:JacobiOp} we can write
\meeq{
\sopmatrix{1 \\ & 1-t} \bvect[\bfP_0^{(0,b)}, \bfP_1^{(0,b)},\ldots] = ((1-t/2) I +  (1/2) t S) \bvect[\bfP_0^{(0,b)}, \bfP_1^{(0,b)},\ldots] \ccr
	=\bvect[\bfP_0^{(0,b)}, \bfP_1^{(0,b)},\ldots] \pr(I + {T_2- T_1 \over 2}).
}
The previous proposition tells us that we know its LU factorisation:
\meeq{
 I + {T_2- T_1 \over 2} = \sopmatrix{I + {A_0^{2,(0,b)} - A_0^{1,(0,b)} \over 2} & {C_1^{2,(0,b)}-C_1^{1,(0,b)} \over 2}\\
			{B_0^{2,(0,b)}-B_0^{1,(0,b)} \over 2} & I + {A_1^{2,(0,b)} - A_1^{1,(0,b)} \over 2}& {C_2^{2,(0,b)}-C_2^{1,(0,b)} \over 2} \\
			& {B_1^{2,(0,b)}-B_1^{2,(0,b)} \over 2} & I + {A_2^{2,(0,b)} - A_2^{1,(0,b)} \over 2}& \ddots \\
			&& \ddots & \ddots} \ccr
			 = \underbrace{\sopmatrix{L_{00}^{(b)} \\ L_{10}^{(b)} & L_{11}^{(b)}\\ & \ddots & \ddots}}_L \underbrace{\sopmatrix{R_{00}^{(b)} & R_{01}^{(b)} \\ & R_{11}^{(b)} & \ddots \\ && \ddots}}_R.
}
The theorem then follows from \thref{LU}.

\end{proof}



\Section{matrixweights} Symmetry-adapted vector OPs on the disk with equivariant matrix weights.

We can use  univariate vector OPs in $\calM$ to build symmetry-adapted vector OPs in the disk with respect to the inner product
\[
\ip<\bff,\bfg>_W := \iint_Ω \bff(x,y)^\star W(x,y) \bfg(x,y) \D \bfx
\]
where $W : Ω \rightarrow \bbR^{2 \times 2}$ is an equivariant, symmetric, and almost everywhere positive definite matrix polynomial.

We first establish a map between $\calM$ and the space of vector symmetry-adapted polynomials with mode $m$:



\Definition{modem}
Define the space of vector symmetry-adapted polynomials with mode $m$ as:
\begin{align*}
\Pi_0^2 &:= \mathspan\!\set{Σ \bfy_0, r^2 \bfy_0, r^2 Σ \bfy_0,…  }, \\
\Pi_m^2 &:= \mathspan\!\set{\bfy_m,Σ \bfy_m, r^2 \bfy_m, r^2 Σ \bfy_m,…  }, \qquad m ≠ 0.
\end{align*}

\Lemma{Pm}
For $m ≠0$,
\begin{align*}
\calP_m \bff(\bfx) &:= r^{|m|-1} \E^{\I m θ} ρ(θ) \sopmatrix{1 & \\  & \I \sign m} \bff(r^2)
\end{align*}
 is a one-to-one map $\calP_m : \calM \rightarrow \Pi_m^2$ with inverse
\[
\calP_m^{-1} \bff(t) =  t^{1-|m| \over 2} \sopmatrix{1 \\ & -\I \sign m} \bff(\sqrt{t},0).
\]
In particular, $\calP_m$ maps between polynomials of degree $n$ and polynomials of degree $2n+|m|-1$.

\begin{proof}
Recall the basis of
degree $2n+|m|-1$ homogeneous polynomials with $m$ introduced in
\lmref{vectorhomogeneous}: $r^{2n} \bfy_m(\bfx)$ and $r^{2n-2}Σ(\bfx) \bfy_m(\bfx)$.
Consider $\calP_m$ applied to a basis of $\calM$:
\meeq{
\calP_m\br[t^n {\Vectt[1,1]}](\bfx) = r^{2n+|m|-1} \E^{\I m θ} ρ(θ) \Vectt[1, \I \sign m] = r^{2n} \bfy_m(\bfx), \ccr
\calP_m\br[t^n {\Vectt[1,-1]}](\bfx) = r^{2n+|m|-1} \E^{\I m θ} ρ(θ) \Vectt[1, -\I \sign m] = r^{2n-2} Σ(𝐱) \bfy_m(\bfx), \quad n>0.
}
Thus we have a one-to-one map between expansions in these two bases.

We now  verify the inverse formula:
\meeq{
\calP_m^{-1}\br[r^{2n} \bfy_m](t)= t^{1-|m| \over 2 }  t^{2n+|m|-1 \over 2}  \Vectt[1,1] = t^n  \Vectt[1,1],  \ccr
\calP_m^{-1}\br[r^{2n-2} Σ \bfy_m](t) =  t^{1-|m| \over 2 }  t^{2n+|m|-1 \over 2}  \Vectt[1,-1] = t^n  \Vectt[1,-1], \quad n>0.
}

\end{proof}

We want to consider equivariant matrix polynomial weights $W$, which live in the matrix polynomial ring
\[
\Pi_0^{2 \times 2} := \mathspan\!\set{r^{2n} \rmY_0, r^{2n} Σ \rmY_0, r^{2n}  \rmY_0 Σ, r^{2n} Σ \rmY_0 Σ \quad \hbox{for}\quad n = 0,1,…}.
\]
We will translate the action of equivariant matrix polynomials on $Π_m^2$
to equivalent actions on $\calM$ via the following intertwining relationships:

\Proposition{intertwinematvec}
For $m ≠ 0$ we have:
\meeq{
\calP_m t^n  = r^{2n} \calP_m, \quad \calP_m  t S  = Σ \calP_m, \quad
\calP_m  t ρ(π/2)  =  \I \sign m ρ(π/2) Σ \calP_m,
}
where again $S = \sopmatrix{1 \\ & -1 }$ and $ρ(π/2) = \sopmatrix{0& -1 \\ 1 & 0}$.

\Proof
The first property is immediate since $t = r^2$. The second property follows from the equivariance of $Σ$, in particular:
\[
Σ(\bfx) ρ(θ) = ρ(θ)Σ(r,0) = r^2 ρ(θ) S.
\]
The last property follows since
\meeq{
ρ(π/2) Σ(\bfx) \calP_m \bff(\bfx) =
r^{|m|-1} \E^{\I m θ} ρ(θ) ρ(π/2)  \sopmatrix{1 & \\  & \I \sign m} S r^2 \bff(r^2) \ccr
= r^{|m|-1} \E^{\I m θ} ρ(θ)  \sopmatrix{0  & \I \sign m \\ 1 & 0} r^2 \bff(r^2)\ccr
= r^{|m|-1} \E^{\I m θ} ρ(θ)  \sopmatrix{- \I \sign m \\ & 1} ρ(π/2) r^2 \bff(r^2) \ccr
= -\I \sign m \calP_m[t ρ(π/2) \bff](\bfx).
}

\mqed

When we apply these relationships to equivariant weights on the disk we get a specific form of weight for $\calM$:

\Lemma{intertwineW} Suppose $W$ is a real symmetric equivariant  matrix polynomial. Then there exist $α,β,γ \in \bbR[t]$ such that
\[
W(x,y) = α(r^2) I + β(r^2) Σ(x,y) + γ(r^2) ρ(π/2) Σ(x,y).
\]
Moreover,
\meeq{
W(x,y) \calP_m = \calP_m V(t)
}
for
\[
V(t) = \sopmatrix{α(t) + t β(t)  & \I \sign m  t γ(t)   \\ -\I \sign m  t γ(t)  & α(t) - t β(t)}.
\]

\begin{proof}
	General equivariant matrix polynomials of (even) degree $p$ can be expanded in
	symmetric and skew-symmetric terms by recombining the basis $r^{2k+4} \rmY_0, r^{2k+2} Σ \rmY_0,r^{2k+2}  \rmY_0Σ$ and $r^{2k} Σ\rmY_0Σ$
	to write:
\meeq{
W(x,y) = \sum_{k=0}^{p/2} \br[α_k r^{2k} \underbrace{\sopmatrix{1 \\ & 1}}_{(Σ\rmY_0+\rmY_0Σ)/2} +  δ_k r^{2k} \underbrace{\sopmatrix{0 &-1 \\ 1 & 0}}_{(Σ\rmY_0-\rmY_0Σ)/(2\I)}] \\
&\qquad  + \sum_{k=0}^{p/2-1} \br[β_k r^{2k} \underbrace{\sopmatrix{x^2 - y^2 & 2xy \\ 2xy & y^2-x^2}}_{(r^4 \rmY_0 + Σ\rmY_0Σ)/2 = Σ(x,y)} + γ_k r^{2k}  \underbrace{\sopmatrix{-2xy & x^2-y^2 \\ x^2-y^2 & 2xy}}_{(r^4 \rmY_0 - Σ\rmY_0Σ)/(2\I) = ρ(π/2)Σ(x,y)}] \ccr
=  \underbrace{\sum_{k=0}^{p/2} α_k r^{2k}}_{=:α(r^2)} I + \underbrace{\sum_{k=0}^{p/2-1} β_k r^{2k}}_{=:β(r^2)} Σ(x,y) + \underbrace{\sum_{k=0}^{p/2-1} γ_k r^{2k}}_{=:γ(r^2)}ρ(π/2) Σ(x,y)
}
where we used $W(x,y) = W(x,y)^\top$ to deduce that $δ_k = 0$ for all $k$.
The lemma then follows from the intertwining relationships.
\end{proof}

We will from now on specialise on the case where $V$ is real, that is $γ = 0$ and hence
$V$ is also diagonal, matching the form considered in \secref{vectoruniOPs}. In this case we can construct multivariate vector OPs in terms of the univariate vector OPs in $\calM$
introduced in the previous section:

\Theorem{rhoGortho}
Suppose $\bfp^{V,(b),ν}_n := \bfp^{t^bV,ν}_n \in \calM$
are degree $n$ vector OPs in $\calM$ with respect to
\[
t^b \underbrace{\sopmatrix{α(t) +t β(t) \\ & α(t)-t β(t)}}_{V(t)}
\]
on $[0,1]$.
Further suppose $p_n^±(t)$ are degree $n$ univariate OPs with respect to $t(α(t)±tβ(t))$ on $[0,1]$.
Define the degree $|m| + 2j-1$ polynomials
\begin{align*}
\bfv_{0j}^{W,1}(\bfx) &:= p_{j-1}^-(r^2) r\bfe_θ, \qquad \bfv_{0j}^{W,2}(\bfx) := p_{j-1}^+(r^2) r\bfe_r, \qquad j ≥ 1,\\
\bfv^{W,ν}_{mj}(\bfx) &:= \calP_m \bfp^{V,(|m|-1),ν}_j(\bfx), \qquad m ≠ 0.
\end{align*}
Then symmetry-adapted vector OPs with respect to
\[
W(\bfx) = α(r^2) I + β(r^2) Σ(\bfx)
\]
of degree $n$ are given by, for $n$ even,
\begin{align*}
	\bfv_{±(n+1),0}^{W,1}, &\bfv_{±(n-1),1}^{W,1}, \ldots,\bfv_{±1,n/2}^{W,1}, \\
	&\bfv_{±(n-1),1}^{W,2}, \ldots, \bfv_{±1,n/2}^{W,2},
\end{align*}
and, for $n$ odd,
\begin{align*}
	\bfv_{±(n+1),0}^{W,1}, &\bfv_{±(n-1),1}^{W,1}, \ldots, \bfv_{±2,(n-1)/2}^{W,1}, \bfv_{0,(n+1)/2}^{W,1}, \\
	& \bfv_{±(n-1),1}^{W,2}, \ldots, \bfv_{±2,(n-1)/2}^{W,2},\bfv_{0,(n+1)/2}^{W,2}.
\end{align*}

\Proof
As in previous proofs, orthogonality between different modes is immediate and so we need only
show orthogonality for the same mode. Note we can reduce the inner product of the disk to an integral on the interval
using that $\bfv_{mj}^{W,ν}$ are symmetry-adapted and $W$ is equivariant:
\meeq{
\ip<\bfv^{W,ν}_{mk}, \bfv^{W,β}_{mj}>_W =  \int _0^1\int_0^{2π} \bfv^{W,ν}_{mk}(\bfx)^\star W(\bfx) \bfv^{W,β}_{mj}(\bfx)   \Dθ r \D r\ccr
= \int_0^{2π} \E^{-\I m θ} \int_0^1 \bfv^{W,ν}_{mk}(r,0)^\star ρ(-θ)ρ(θ)  W(r,0)  ρ(-θ)ρ(θ) \bfv^{W,β}_{mj}(r,0)r \D r \E^{\I m θ}  \D θ \ccr
= 2π \int_0^1 \bfv^{W,ν}_{mk}(r,0)^\star  V(r^2) \bfv^{W,β}_{mj}(r,0)r \D r,
}
using $W(r,0) = V(r^2)$.

For $m = 0$ we find:
\meeq{
\ip<\bfv^{W,1}_{0k}, \bfv^{W,2}_{0 j}>_W = 2π \int_0^1 p^+_{k-1}(r^2)  p^-_{j-1}(r^2) \bfe_1^\top V(r^2) \bfe_2 r^3 \D r = 0
}
since by assumption $V$ is diagonal. And for the same superscript we have, for $k ≠ j$,
\meeq{
\ip<\bfv^{W,1}_{0k}, \bfv^{W,1}_{0j}>_W = 2π \int_0^1 p^+_{k-1}(r^2)  p^+_{j-1}(r^2) \bfe_1^\top V(r^2) \bfe_1 r^3 \D r  \ccr
= π \int_0^1 p^+_{k-1}(t)  p^+_{j-1}(t) (α(t)+tβ(t)) t \D t = 0,
}
by the orthogonality of $p^+_k$. A similar argument shows $\ip<\bfv^{W,2}_{0k}, \bfv^{W,2}_{0j}>_W = 0$.

Now consider $m ≠ 0$.
We can write:
\meeq{
\bfv^{W,ν}_{mj}({r,0}) =\calP_m \bfp^{V,(|m|-1),ν}_j({r,0})
= r^{|m|-1} \underbrace{\mdiag(1,\I \sign m)}_{=:S_m} \bfp^{V,(|m|-1),ν}_j(r^2).
}
Using $ S_m^\star V(t) S_m = V(t)$ we find for $k ≠ j$:
\meeq{
\ip<\bfv^{W,ν}_{mk}, \bfv^{W,β}_{mj}>_W = 2π \int_0^1 \bfp^{V,(|m|-1),ν}_{mk}(r^2)^\top  S_m^\star  V(r^2) S_m \bfp^{V,(|m|-1),β}_{mj}(r^2)r^{2|m|-1} \D r \ccr
= π \int_0^1 \bfp^{V,(|m|-1),ν}_{mk}(t)^\top   V(t)  \bfp^{V,(|m|-1),β}_{mj}(t) t^{|m|-1} \D t = 0.
}

\mqed

In the special case of a uniform weight, $W(x,y) = I$ where  $α = 1$ and $β = 0$ and hence $t (α(t) ± t β(t)) = t$, we obtain via \propref{jacobiWt}:
\meeq{
\bfv_{0j}^{I,1}(\bfx) = P_{j-1}^{(0,1)}(2r^2-1) r \bfe_r, \qquad \bfv_{0j}^{I,2}(\bfx) = P_{j-1}^{(0,1)}(2r^2-1) r \bfe_θ, \qquad j ≥ 1, \ccr
\bfv_{mj}^{I,1}(\bfx) = \calP_m \bfp_j^{(0,|m|-1),1}(\bfx) = P_j^{(0,|m|-1)}(2r^2-1) r^{|m|-1} ρ(θ) \E^{\I m θ} \Vectt[1,\I \sign m] \ccr
		= P_j^{(0,|m|-1)}(2r^2-1) \bfy_m(\bfx), \qquad m ≠ 0\hbox{ and }j ≥ 0, \ccr
\bfv_{mj}^{I,2}(\bfx) = \calP_m \bfp_j^{(0,|m|-1),2}(\bfx) = P_{j-1}^{(1,|m|+1)}(2r^2-1) r^{|m|+1} ρ(θ) \E^{\I m θ} \Vectt[1,-\I \sign m] \ccr
= P_{j-1}^{(0,|m|+1)}(2r^2-1) Σ(\bfx) \bfy_m(\bfx), \qquad m ≠ 0\hbox{ and }j ≥ 1.
}
for $m ≠ 0$. We can relate these to the vector Zernike polynomials defined in \secref{VectorZernike} as follows:
\meeq{
\bfz_{0j}^1 = \bfv_{0j}^{I,1} + \I \bfv_{0j}^{I,2}, \qquad \bfz_{0j}^1 = \bfv_{0j}^{I,1} -\I \bfv_{0j}^{I,2}, \qquad  \bfz_{mj}^ν = \bfv_{mj}^{I,ν}.
}

\subsection{Orthogonal polynomials for a weight that is normal at the boundary}\label{Section:normal}

We can use the above construct to build $\bfn_{kj}^ν$, a natural basis for vector polynomials that are normal on the boundary of the disk. We introduce the symmetric-positive definite equivariant matrix weight:
\begin{equation}\label{Equation:N}
N(x,y) := \sopmatrix{
1 - y^2 & xy \\
x y & 1-x^2
} =  \pr(1-{r^2 \over 2}) I  +  {Σ(x,y) \over 2} = ρ(θ) \sopmatrix{1 \\ & 1 - r^2} ρ(-θ),
\end{equation}
noting that $ρ(θ) = \bvect[\bfe_r, \bfe_θ]$, that is, when $r = 0$ multiplication by $N$ imposes that the tangential component vanishes.
This weight is fundamental in describing polynomials that are normal at the boundary of a disk, i.e., in the following module over $\bbR[x,y]$:
\begin{equation}\label{Equation:calN}
\calN := \{ \bfp \in \bbR[x,y]^2 :  \bfp^\top \Vectt[-y,x] = 0 \hbox{ when } x^2+y^2 = 1\}.
\end{equation}
In particular, we will show that every $\bfp \in \calN$ can be written as $N$ times another vector polynomial.

We first decompose $\calN$ into ``true bubbles'' (polynomials that vanish at the boundary),
and two other simple polynomial sets:

\Lemma{normalpolyfirst}
\[
\bbR[x,y]^2 = \calK \oplus  \underbrace{\calE \oplus \calB}_{= \calN}
\]
for
\begin{align*}
\calK &:= \mathspan\!\set{\Vectt[1,0] , \Vectt[0,1], \Vectt[y,x], \Vectt[x,-y], x^k \Vectt[-y,x]\hbox{ for } k ≥ 0, x^{k-1} y \Vectt[-y,x] \hbox{ for } k > 0}, \\
\calE &:= \mathspan\!\set{x^k \Vectt[x,y]\hbox{ for } k ≥ 0, x^{k-1} y \Vectt[x,y] \hbox{ for } k > 0},  \\
\calB &:=  (1-x^2-y^2) \bbR[x,y]^2.
\end{align*}

\Proof

First note $\calE,\calB \subset \calN$. We will decompose homogeneous polynomials of each degree, and in the process show that
$\calK \cap \calN = \{0\}$ and $\calE \cap \calB$, which will imply  the decomposition. For degree 0 polynomials we have
\[
 \br[c	{\Vectt[1,0]} + d {\Vectt[0,1]}]^\top \Vectt[-y,x] = dx - cy
	\]
which by linear independence is only identically zero if $c = d = 0$, that is, no nontrivial degree 0 polynomial is in $\calN$. The space generated by degree 1 polynomials in $\calK$ satisfy, for $x^2+y^2 = 1$,
\[
\br[{c	\Vectt[-y,x] + d \Vectt[x,-y] + e \Vectt[y,x]}]^\top \Vectt[-y,x] = c- 2dxy + e (x^2-y^2) ≠ 0
\]
unless $c = d= 0$ by linear independence. The remaining basis element is $\Vectt[x,y] \in \calE \subset \calN$.
For higher order polynomials of degree $n≥2$ we only have two terms in $\calK$ which satisfy, for $x^2+y^2=1$,
\[
\Vectt[-y,x]^\top \Vectt[-y,x](c x^k  + d x^{k-1} y) = c x^k  + d x^{k-1} y ≠ 0
\]
unless $c = d = 0$. We then have $2n$ basis terms of degree $n$ given by:
\begin{align*}
\underbrace{x^{n-1} \Vectt[x,y] , x^{n-2} y \Vectt[x,y]}_{\in \calE},
 \underbrace{x^{n-2-k} y^k (1-x^2-y^2) \Vectt[1,0], x^{n-2-k} y^k (1-x^2-y^2) \Vectt[0,1]}_{\in \calB}
\end{align*}
for $k = 0,…,n-2$.
To see these are linearly independent we note for $x^2+y^2 = 1$ that the basis elements in $\calE$ satisfy
\[
\vectt[x,y] \br[c x^{n-1} {\Vectt[x,y]} + d x^{n-2} y {\Vectt[x,y]}] = c x^{n-1} + d x^{n-2} y ≠ 0
\]
unless $c = d = 0$, hence the intersection of their span with $\calB$ is trivial.

Since $\calB, \calE \subset \calN$ and $\calK \cap \calN = \{0\}$
we deduce that $\calN = \calE \oplus \calB$.

\mqed

\begin{remark}
	The definition of $\calK$ is not unique: it is one choice of basis and one can modify the basis of $\calK$ by adding elements of $\calN$ to form a new basis. For example, we could replace $x^k \Vectt[-y,x], x^{k-1} y\Vectt[-y,x]$ with the symmetry-adapted basis $(x±\I y)^k \Vectt[-y,x]$ or even $(x±\I y)^k \Vectt[y,x]$.
\end{remark}

A consequence of this lemma is that we can build a basis for $\calN$ by multiplying general vector polynomials by $N(x,y)$:

\Lemma{normalpoly}
$\calN =  \{ N \bfp : \bfp \in \bbR[x,y]^2\}$.

\Proof

Note that
\meeq{
\Vectt[x,y] = N(x,y) \Vectt[x,y],\quad (1-r^2) \Vectt[1,0] = N(x,y) \Vectt[1-x^2,-xy],\quad (1-r^2) \Vectt[0,1] = N(x,y) \Vectt[-xy,1-y^2].
}
The theorem follows since these generate bases for $\calE$ and $\calB$,
and hence $\calN$.

\mqed

It is therefore natural to consider a basis of vector polynomials which we multiple
by $N(x,y)$ to get a basis for $\calN$. Since $N$ is symmetric-positive definite we can
use it as a weight of orthogonality, and we will see
in \secref{veccalc} that these lead to simple formula for gradients and curl in terms of scalar (weighted) Zernike polynomials.
Using \thref{rhoGortho} alongside \thref{normalortho} we get an explicit representation:

\Corollary{normalops}
For $\bfq_j^{(b),ν}$ defined in \defref{normalop} we have
\meeq{
\bfv_{0j}^{N,1}(\bfx) =P_{j-1}^{(0,1)}(2r^2-1) r \bfe_r, \qquad \bfv_{0j}^{N,2}(\bfx) =P_{j-1}^{(1,1)}(2r^2-1) r \bfe_θ, \qquad j ≥ 1 \ccr
\bfv_{mj}^{N,ν}(\bfx) =\calP_m \bfq_j^{(|m|-1),ν}(\bfx) = r^{|m|-1} \E^{\I m θ}  ρ(θ) \sopmatrix{1 \\ & \I \sign m} \bfq_j^{(|m|-1),ν}(r^2),\qquad m ≠ 0.
}

We use these to define an explicit basis for $\calN$ as follows:

\Definition{normalops} Define polynomials of degree $≤ 2j+|m|+1$ by
\[
\bfn_{mj}^ν(\bfx) := N(\bfx) \bfv_{mj}^{N,ν}(\bfx).
\]

Note it will be helpful to  view these as orthogonal polynomials in $\calN$ with respect to the (non-integrable) weight $N^{-1}$,
that is, with respect to the inner product, for $\bff, \bfg \in \calN$,
\[
\ip<\bff,\bfg>_{N^{-1}} = \iint_Ω \bff(\bfx)^\star N^{-1}(\bfx) \bfg(\bfx)   \D \bfx
= \iint_Ω \bff(\bfx)^\star {T(\bfx) \over 1-x^2-y^2} \bfg(\bfx)   \D \bfx
\]
for a tangential analogue of $N$ defined by
\begin{equation}\label{Equation:T}
T(x,y) := \sopmatrix{
1 - x^2 & -xy \\
-x y & 1-y^2
} = ρ(θ) \sopmatrix{1 - r^2 \\ & 1} ρ(-θ),
\end{equation}
where we use $T(\bfx) N(\bfx) = N(\bfx) T(\bfx) = (1-x^2-y^2) I$.

\Section{veccalc}  Relationship with the de Rham complex of the disk.

We are now in a place to describe our main result.
We will show that $\{\rmw_{mj}\}, \{\bfn_{mj}^ν\},$ and $\{\rmz_{mj}\}$ give
natural bases for the de Rham complex in a disk, and in particular have very simple recurrence relationships relating their gradient and curl.
These lead to a decomposition of the de Rham complex into simple exact sub-complexes.

Recall the {\it de Rham complex with boundary conditions}  (see \cite[Section 4.5.5]{Arnold2018})  which in 2D is
\begin{align*}
0 \rightarrow  \Ho^1 \overset{\nabla}{\rightarrow}   \Hocurl \overset{\curl}{\rightarrow} L^2 \rightarrow 0,
\end{align*}
for the 2D Sobolev spaces on the disk:
\begin{align*}
	H^1 &:= \{f  : \| \nabla f\|^2 +  \|  f\|^2 < ∞ \}, \qquad \Hocurl :=  \{\bff \in \Hcurl : \| \curl \bff\|^2 +  \|  \bff\|^2 < ∞ \},\\
 \Ho^1 &:= \{f \in H^1 :  f|_{\partial Ω} = 0 \}, \qquad \Hocurl :=  \{\bff \in \Hcurl :  \bff|_{\partial Ω} \cdot \bfe_θ = 0 \},
\end{align*}
where the differential operators and trace operators are understood in the weak sense.

Polynomials that vanish on the boundary are a natural way to discretise $\Ho^1$ while
polynomials normal on the boundary are a natural way to discretise $\Hocurl$.
Thus $\{\rmw_{mj}\}$ serves as a natural basis for $\Ho^1$,
$\{\bfn_{mj}^ν\}$ is a natural basis for $\Hocurl$, and $\{\rmz_{mj}\}$ is a natural basis for $L^2$.
We will go one step further and recombine $\bfn_{mj}^ν$ to form a natural basis that
separates out the range of $\nabla$ / kernel of $\curl$:

\Definition{nplus}
\begin{align*}
\bfn_{0j}^+ &:= 2\bfn_{0j}^1, & \bfn_{0j}^- &:= 2\bfn_{0j}^2, \qquad j ≥ 1, \\
 \bfn_{m0}^- &:= 2\bfn_{m0}^1,&
\bfn_{mj}^± &:= \bfn_{mj}^1 ± \bfn_{mj}^2 \qquad m ≠ 0,\qquad j ≥ 1.
\end{align*}


Note that while the definition of $\bfn_{mj}^+$ is canonical,  the definition of $\bfn_{mj}^-$ is somewhat arbitrary, and the following discussion would work equally well with any linear combination of $\bfn_{mj}^1$ and $\bfn_{mj}^2$  that is linearly independent of $\bfn_{mj}^+$.
For example, an alternative linear combination could be chosen to preserve orthogonality with respect to $\ip<\cdot, \cdot>_{N^{-1}}$.

\subsection{Expressions for differential operators.}

We have extremely simple expression for the gradient relating the scalar and vector basis:

\Theorem{gradW}
\meeq{
\nabla \rmw_{mj} = -(j+1) \bfn_{m,j+1}^+.
}

\Proof
This can be shown using properties of Jacobi polynomials but we prefer an integration-by-parts argument that is more amenable to future generalisation. The gradient of a polynomial that vanishes on the circle is normal at the boundary, that is $\nabla \rmw_{mj} \in \calN$, and orthogonal to other modes (by \lmref{ortho}).  Hence we can expand
\[
\nabla \rmw_{mj} = \sum_{k=0}^∞ c_k^1 \bfn_{mk}^1 +
 \sum_{k=1}^∞ c_k^2 \bfn_{mk}^2,
\]
where by orthogonality we have
\[
c_k^ν = {\ip<\nabla \rmw_{mj},\bfn_{mk}^ν>_{N^{-1}} \over \ip<\bfn_{mk}^ν,\bfn_{mk}^ν>_{N^{-1}}},
\]
where we take $c_0^1 = 0$ when $m = 0$ (since $\bfn_{00}^1$ is not defined).
We first establish that $c_k^ν = 0$ unless $k = j+1$.
Recall that
$
N(x,y)^{-1} = (1- x^2-y^2)^{-1} T(x,y).
$
This tells us that if $\bff  =  N \bfg \in \calN$ then $\bfg = N^{-1} \bff$ is at most the same degree as $\bff$
(since multiplying by $T$ at most increases the degree by 2 while dividing by $1-x^2-y^2$ will decrease the degree by 2).

When $k \leq j$ we have by integration-by-parts (using that $\rmw_{mk}$ vanishes on the boundary):
\meeq{
	\ip<\nabla \rmw_{mj}, \bfn_{mk}^ν>_{N^{-1}} =
\big\langle\nabla \rmw_{mj}   ,\underbrace{N^{-1} \bfn_{mk}^ν}_{= \bfv_{mk}^{N,ν}}\big\rangle =
-
\bigl< \rmw_{mj}   ,\!\!\! \underbrace{\divergence \bfv_{mk}^{N,ν}}_{\mathrm{degree}\ ≤2k+|m|-2} \!\!\!\bigr> = 0
}
as $\rmw_{mj}$ is orthogonal to all polynomials of degree $< 2j+|m|$. On the other hand if $k > j+1$ we have
\meeq{
 \bigl< \!\!\!\underbrace{\nabla w_{mj}}_{\mathrm{degree}\ 2j+|m|+1}  \!\!\! , \bfn_{mk}^ν\bigr>_{ N^{-1}} =
 \bigl< \!\!\!\underbrace{N^{-1} \nabla w_{mj}}_{\mathrm{degree}\ ≤2j+|m|+1} \!\!\!,    \bfn_{mk}^ν\bigr> = 0
}
as $\bfn_{mk}^ν = N \bfv_{mk}^{N,ν}$ is orthogonal to polynomials of degree $ < 2k+|m|-1$. Therefore, we deduce that the expansion simplifies to:
\[
\nabla w_{mj} = c_{m,j+1}^1 \bfn_{m,j+1}^1 + c_{m,j+1}^2 \bfn_{m,j+1}^2.
\]

To deduce these constants we will match the leading order terms, beginning with the $m ≠ 0$ case. In particular, first note using \cite[§18.3]{DLMF}
\[
P_n^{(1,b)}(x) = {(2n+b+1)! \over 2^n n! (n+b+1)!} x^n + O(x^{n-1})
\]
implies that
\[
\rmw_{mj}(\bfx) =
-{(2j+|m|+1)! \over j! (j+|m|+1)!} r^{2j+|m| + 2}  \E^{\I m θ} + O(r^{2j+|m|+1}).
\]
We thus know the gradient is
\[
\nabla w_{mj}(\bfx) = -{(2j+|m|+1)! \over j! (j+|m|+1)!} r^{2j+|m|+1}  \br[(2j+|m|+2) \bfe_r + \I m \bfe_θ] \E^{\I m θ} + O(r^{2j+|m|})
\]
We multiply this by $T$ to find that
\[
T \nabla w_{mj}(\bfx) =
{(2j+|m|+2)! \over j! (j+|m|+1)!} r^{2j+|m|+3} \bfe_r \E^{\I m θ} + O(r^{2j+|m|}).
\]
On the other hand, note that
\[
\bfq_n^{(b),1}(t) + \bfq_n^{(b),2}(t) = {(2n+b+1)! \over n! (n+b+1)!} \Vectt [1,0]  t^n + O(t^{n-1}).
\]
If $\bff(t) = \bfc t^j + O(t^{j-1})$ then we have
\[
\calP_m \bff(\bfx) = r^{2j+|m| -1}  \bvect[\bfe_r, \bfe_θ] \sopmatrix{1 \\ &  \I \sign m}  \bfc  \E^{\I m θ} + O(r^{2j+|m| -3}).
\]
Therefore we have (using $T = (1-x^2-y^2) N^{-1}$)
\meeq{
T (\bfn_{m,j+1}^1(\bfx) + \bfn_{m,j+1}^2(\bfx)) = (1-r^2) \calP_m[\bfq_{j+1}^{|m|-1,1} + \bfq_{j+1}^{|m|-1,2}](\bfx) \ccr
= -{(2j+|m|+2)! \over (j+1)! (j+|m|+1)!}  r^{2j+|m|+3}  \bfe_r + O(r^{2j+|m|+1}).
}
Multiplying this by $-(j+1)$ matches the normalisation constant for $T \nabla w_{mj}$, that is
 $c_{mj}^1 = c_{mj}^2 = -(j+1)$. A similar matching term argument confirms the $m = 0$ formula.

%

\mqed

A similar argument gives a simple expression for the curl:

\Theorem{curlW} For $\weirdrat = 10n^2 + n(13b+19)+ 4(b+1)(b+2)$ we have $\curl \bfn_{mj}^+ = 0$ and
\meeq{
 \qquad \curl \bfn_{0j}^- = -4j \rmz_{0j}, \quad j ≥ 1, \ccr
\curl \bfn_{mj}^- = -{4(2j+|m|) (2j+|m|+1) (2j+|m|+2) \over 𝔯_j^{(|m|-1)}} \I \sign m  \rmz_{mj} \quad m ≠ 0.
}

\begin{proof}

We have $\curl \bfn_{mj}^+ = -\curl \nabla w_{m,j-1}/(j+1) = 0$.  By orthogonality to other modes (\lmref{ortho}) we know we can expand
\[
\curl \bfn_{mj}^ν = \sum_{k=0}^∞ c_k \rmz_{mk}, \qquad c_k = {\ip<\curl \bfn_{mj}^ν, \rmz_{mk}> \over \ip<\rmz_{mk},\rmz_{mk}>}.
\]
We first establish that $c_k = 0$ unless $k = j$. When $j < k$ we have
\[
\bigl<\underbrace{\curl \bfn_{mj}^ν}_{\mathrm{degree}\ ≤|m| + 2j}, \rmz_{mk}\bigr> =0
\]
since $\rmz_{mk}$ orthogonal to all polynomials of degree $< |m|+2k$.
When $j > k$ we integrate by parts:
\meeq{
\ip<\rmz_{mk},\curl \bfn_{mj}^ν> =
-\ip<\rmz_{mk},\divergence ρ(π/2)\bfn_{mj}^ν> \ccr
=-\oint_{\partial Ω} \rmz_{mk}  \underbrace{\bfx^\top ρ(π/2) \bfn_{mj}^ν}_{=0} \D s + \bigl<\nabla \rmz_{mk},
 ρ(π/2) \bfn_{mj}^ν\bigr> \ccr
   =\bigl<\underbrace{ρ(-π/2) \nabla \rmz_{mk}}_{\mathrm{degree}\ |m|+2k-1},
  \bfn_{mj}^ν\bigr>  =0
}
as $ \bfn_{mj}^ν = N \bfv_{mj}^{N,ν}$ are weighted (vector) orthogonal polynomials, therefore they are orthogonal to all polynomials of degree $< 2j+|m|-1$.

The   constant $c_j$ then follows by matching the leading order terms.

\end{proof}

\subsection{Exactness of the resulting de Rham sub-complexes.}

The exactness of the de Rham complex with boundary conditions is related to the {\it Betti numbers} (see eg. \cite{arnold2006finite}).
In the case of the disk the Betti numbers are $b_0 = 1$ (the number of connected components), $b_1 =0$ 
(the number of closed loops up to homotopy) and $b_2 = 0$ (the number of closed surfaces up to homotopy).
For a de Rham complex with boundary conditions these tell us that the dimension of the kernel of $\nabla$ is $b_2 = 0$,
the range of $\nabla$ equals the kernel of $\curl$ since the dimension of their quotient is $b_1 = 0$,
whilst there is a one dimension space that is not in the range of $\curl$ (which are the constant functions)
since the dimension of the quotient of the range of $\curl$ with $L^2$ equals $b_0 = 1$.

We can use the above results to decompose the space of polynomials into exact sub-complexes. In particular we have for each $m$ and $j \geq 0$ the following exact complexes:
\begin{align*}
0 \rightarrow  \mathspan\!\set{\rmw_{mj}} \overset{\nabla}{\rightarrow}   \mathspan\!\set{\bfn_{m,j+1}^+ \atop \bfn_{m,j+1}^-}   \overset{\curl}{\rightarrow} \mathspan\!\set{\rmz_{m,j+1}} \rightarrow 0,
\end{align*}
We can see this is exact as $\bfn_{mj}^+$ spans the kernel of $\curl$ as the curl of $\bfn_{mj}^-$ is not zero.  We also have  additional exact complexes for $m ≠ 0$:
\[
\begin{matrix}
0 &\rightarrow &   0 & \overset{\nabla}{\rightarrow}  &\mathspan\!\set{\bfn_{m0}^-}  & \overset{\curl}{\rightarrow} & \mathspan\!\set{\rmz_{m0}}  &\rightarrow &0.
\end{matrix}
\]
We finally have the constants $\mathspan\!\set{\rmz_{00}} \subset L^2$,  which are not in the range of $\curl$ and thus do not form part of an exact complex.  This matches the predicted theory since $b_0 = 1$.

\Section{cylinders} Extension to  cylinders.

The weighted vector orthogonal polynomials  $\bfn_{mj}^ν$ can be used to build bases for the 3D de Rham complex with boundary conditions in cylinders:
\begin{align*}
0 \rightarrow  \Ho^1 \overset{\nabla}{\rightarrow}   \Hocurl \overset{\curl}{\rightarrow}  \Hodiv \overset{\divergence}{\rightarrow}  L^2 \rightarrow 0,
\end{align*}
for the 3D Sobolev spaces over a cylinder $Γ$:
\begin{align*}
 \Hocurl &:=  \{\bff \in \Hcurl: \bff|_{\partial Ω}  \times \bfe_r = 0 \}, &
  \Hodiv &:=  \{\bff \in \Hdiv: \bff|_{\partial Ω}  \cdot \bfe_r = 0 \},
\end{align*}
where again differential and trace operators are understood in the weak sense.
Here we consider the case where $Γ$ is a periodic or finite cylinder:
\meeq{
  Ω \times \bbT = \{\vectt[x,y,z] : x^2+ y^2 ≤ 1, 0 ≤ z < 2π \},\ccr
  Ω \times [-1,1] = \{\vectt[x,y,z] : x^2+ y^2 ≤ 1, -1 ≤ z ≤ 1 \}
}
where $\bbT = [0,2π) \cong \bbR / (2π \bbR)$ is the periodic one-dimensional torus.

In order to represent a basis for $\Hodiv$ we will need to use a basis of vector polynomials that are tangential
on the boundary of a disk:
\begin{equation}\label{Equation:tangent}
\bft_{mj}^ν(\bfx) := ρ(π/2) \bfn_{mj}^ν(\bfx) =  T(\bfx)ρ(π/2) \bfv_{mj}^{N,ν}(\bfx).
\end{equation}
where multiplication by
\[
T(x,y) := \sopmatrix{
1 - x^2 & -xy \\
-x y & 1-y^2
} = ρ(θ) \sopmatrix{1 - r^2 \\ & 1} ρ(-θ)
\]
causes a vector to be tangential to the boundary, and we use the relationship that
\[
ρ(π/2) N(\bfx) = ρ(θ) \sopmatrix{0 & r^2 -1 \\ 1 & 0} ρ(-θ) = T(\bfx) ρ(π/2).
\]
Note that
$
\divergence \bft_{mj}^± = -\curl \bfn_{mj}^±
$
for $\bft_{mj}^± := ρ(π/2) \bfn_{mj}^±$.
Therefore we can adapt the simple formula in \thref{curlW}  to the divergence of $\bft_{mj}^±$, in particular $\divergence \bft_{mj}^+ = 0 $ whilst
\meeq{
\divergence \bft_{0j}^- = 4j \rmz_{0j}, \qquad j ≥ 1, \ccr
\divergence \bft_{mj}^- = {4(2j+|m|) (2j+|m|+1) (2j+|m|+2) \over 𝔯_j^{(|m|-1)}} \I \sign m  \rmz_{mj} \quad m ≠ 0.
}

\subsection{Cylinders with periodic boundary conditions}

We begin with the special case of a cylinder which is periodic in the $z$ direction, $Γ =Ω \times \bbT$. A natural basis for $\Ho^1$ is then
\[
\set{\E^{\I k z} \rmw_{mj}(x,y) : m,k \in \bbZ, j ≥ 0}.
\]
Note that this is a symmetry-adapted basis for both rotations (in $x,y$) and translations (in $z$, that is, a translation by $τ$ in the $z$ direction is equivalent to multiplication by $\E^{\I k τ}$).

A natural basis for $\Hocurl$ is
\begin{align*}
\set{\E^{\I k z} \Vectt[{\bfn_{mj}^+(x,y)}, 0] : m,k \in \bbZ, j ≥ 1  } &\cup \set{\E^{\I k z} \Vectt[{\bfn_{mj}^-(x,y)}, 0] : m,k \in \bbZ, j ≥ \max(0,1-|m|)  } \\
&\cup \set{\E^{\I k z} \Vectt[𝟎,{\rmw_{mj}(x,y)}]  : m,k \in \bbZ, j ≥ 0},
\end{align*}
which are normal on the boundary of the cylinder.
Indeed, the gradient of the basis for $\Ho^1$ satisfies a simple recurrence when expanded in the basis for $\Hocurl$:
\meeq{
\nabla \E^{\I k z} \rmw_{mj}(x,y,z) =   \E^{\I k z} \Vectt[{\nabla \rmw_{mj}(x,y)}, {\I k\rmw_{mj}(x,y)}] =   \E^{\I k z}  \Vectt[{-(j+1)\bfn_{m,j+1}^+(x,y)}, {\I k \rmw_{mj}(x,y)}] \ccr
= -(j+1) \E^{\I k z} \Vectt[{\bfn_{m,j+1}^+(x,y)}, 0] + \I k \E^{\I k z} \Vectt[𝟎,{\rmw_{mj}(x,y)}].
}

For $\Hodiv$ a natural basis is
\begin{align*}
\set{\E^{\I k z} \Vectt[{\bft_{mj}^+(x,y)}, 0] : m,k \in \bbZ, j ≥ 1  } &\cup \set{\E^{\I k z} \Vectt[{\bft_{mj}^-(x,y)}, 0] : m,k \in \bbZ, j ≥ \max(0,1-|m|)  } \\
&\cup \set{\E^{\I k z} \Vectt[𝟎,{\rmz_{mj}(x,y)}]  : m,k \in \bbZ, j ≥ 0},
\end{align*}
which is tangential on the boundary of the cylinder.
Indeed, writing the curl operator as
\[
\curl  \Vectt[
 {f},{g},{h}
] = \Vectt[\partial_y h - \partial_z g, \partial_z f - \partial_x h, \partial_x g - \partial_y f] = \Vectt[ρ(π/2) {\br[ \partial_z {\Vectt[f,g]}-{\Vectt[\partial_x,\partial_y]} h]}, \partial_x g - \partial_y f]
\]
we see that applying it to the $\Hocurl$ basis gives us a simple combination of the $\Hodiv$ basis:
\begin{align*}
\curl\ &\E^{\I k z} \Vectt[{\bfn_{mj}^+(x,y)}, 0] = \E^{\I k z} \Vectt[\I k ρ(π/2)  {\bfn_{mj}^+(x,y)},
\curl {\bfn_{mj}^+(x,y)}] =\I k \E^{\I k z} \Vectt[  {\bft_{mj}^+(x,y)}, 0], \\
\curl\ &\E^{\I k z} \Vectt[{\bfn_{mj}^-(x,y)}, 0] = \E^{\I k z} \Vectt[\I k ρ(π/2)  {\bfn_{mj}^-(x,y)},
\curl {\bfn_{mj}^-(x,y)}] \ccr
= \I k  \E^{\I k z} \Vectt[{\bft_{mj}^-(x,y)},0] -
κ_{mj} \E^{\I k z} \Vectt[𝟎,{\rmz_{mj}(x,y)}],\\
  \curl\ & \E^{\I k z} \Vectt[𝟎,{\rmw_{m,j-1}(x,y)}] = -\E^{\I k z} \Vectt[ρ(π/2) \nabla {\rmw_{m,j-1}(x,y)},0]
  = j \E^{\I k z}
  {\Vectt[ {\bft_{mj}^+(x,y)},0]}
\end{align*}
for
\begin{equation}\label{Equation:kappa}
κ_{mj} := {\begin{cases}
4j & m  = 0 \\
  {4(2j+|m|) (2j+|m|+1) (2j+|m|+2) \over 𝔯_j^{(|m|-1)}}  \I \sign m  & \hbox{otherwise}
  \end{cases}}.
\end{equation}

Finally, an orthogonal basis for $L^2$ is
\[
\set{ \E^{\I k z} \rmz_{mj}(x,y) : m,k \in \bbZ, j ≥ 0}
\]
and the divergence of the $\Hodiv$ basis has a simple relationship:
\meeq{
	\divergence \E^{\I k z} \Vectt[{\bft_{mj}^+(x,y)}, 0] =
\E^{\I k z} \divergence \bft_{mj}^+(x,y)
= -\E^{\I k z} \curl \bfn_{mj}^+(x,y) = 0 \ccr
\divergence \E^{\I k z} \Vectt[{\bft_{mj}^-(x,y)}, 0]
 =κ_{mj}  \E^{\I k z}  \rmz_{mj}(x,y), \ccr
\divergence \E^{\I k z} \Vectt[𝟎,{\rmz_{mj}(x,y)}] = \I k \E^{\I k z}  \rmz_{mj}(x,y).
}

\subsubsection{Exactness of the periodic cylinder de Rham complex with boundary conditions.}

We can use the above results to decompose the de Rham complex into sub-complexes. The periodic cylinder has Betti numbers
$b_0 = 1$ (one connected component), $b_1 = 1$ (exactly one loop up to homotopy), $b_2 = 0$ (all closed surfaces are homotopic to a point) and $b_3 = 0$. As in a disk, $b_0 = 1$ corresponds to the one-dimensional space of constants in $L^2$ which are not equal to the divergence of a in $\Hodiv$ (i.e., tangential to the boundary). Since $b_1 = 1$ we have a one-dimensional space
\[
\mathspan \underbrace{\Vectt[0,0,1]}_{=\Vectt[𝟎,\rmz_{00}]} \subset \Hodiv.
\]
which are in the kernel of $\divergence$ but cannot be written as the $\curl$ of a function in $\Hocurl$. 

Nevertheless, we can decompose the de Rham complex into the following exact sub-complexes for each $m$, $k$ and $j \geq 0$:
\begin{align*}
0 &\rightarrow  \mathspan\!\set{\E^{\I k z} \rmw_{mj}(x,y)} \overset{\nabla}{\rightarrow}   \mathspan\!\set{\begin{matrix} \E^{\I k z}\Vectt[{\bfn_{m,j+1}^+(x,y)},0]  \\ \E^{\I k z}\Vectt[{\bfn_{m,j+1}^-(x,y)},0]\\
\E^{\I k z} \Vectt[𝟎,{\rmw_{mj}(x,y)}]
\end{matrix}}  \\
& \overset{\curl}{\rightarrow}
\mathspan\!\set{\begin{matrix} \E^{\I k z}\Vectt[{\bft_{m,j+1}^+(x,y)},0]  \\ \E^{\I k z}\Vectt[{\bft_{m,j+1}^-(x,y)},0]\\
\E^{\I k z} \Vectt[𝟎,{\rmz_{m,j+1}(x,y)}]
\end{matrix}}  \overset{\divergence}{\rightarrow}
\mathspan\!\set{\E^{\I k z}\rmz_{m,j+1}(x,y)} \rightarrow 0.
\end{align*}
We can see this is exact as $\bfn_{mj}^+$ spans the kernel of $\curl$ as the curl of $\bfn_{mj}^-$ cannot be zero, and the only possible linear combination of the $\Hodiv$ basis whose divergence is zero is the curl of the $\Hocurl$ basis.   We also have  additional exact sub-complexes for $m ≠ 0$
\begin{align*}
0 \rightarrow   0 \overset{\nabla}{\rightarrow}  \mathspan\!\set{\E^{\I k z}  \Vectt[ {\bfn_{m0}^-(x,y)},0]}   \overset{\curl}{\rightarrow} & \mathspan\!\set{\begin{matrix} \E^{\I k z}\Vectt[{\bft_{m0}^+(x,y)},0]  \\ \E^{\I k z}\Vectt[{\bft_{m0}^-(x,y)},0] \\
\E^{\I k z} \Vectt[𝟎,{\rmz_{m0}(x,y)}]
\end{matrix}}
\overset{\divergence}{\rightarrow}
\mathspan\!\set{\E^{\I k z}\rmz_{m0}(x,y)} \rightarrow 0
\end{align*}
Finally, there are the additional  sub-complexes
\begin{align*}
0 \rightarrow   0 \overset{\nabla}{\rightarrow} 0  \overset{\curl}{\rightarrow} & \mathspan\!\set{
\E^{\I k z} \Vectt[𝟎,{\rmz_{00}(x,y)}]}
\overset{\divergence}{\rightarrow}
\mathspan\!\set{\E^{\I k z}\rmz_{00}(x,y)} \rightarrow 0.
\end{align*}
which are exact apart from when $k = 0$ (where we pick up the one-dimensional spaces predicted to not be part of an exact complex).

\subsection{Finite cylinders}

We can adapt the above construction to finite cylinders using suitable choices of (weighted) ultraspherical polynomials (or, equivalently, integrated Legendre functions \ala\  $p$-FEM \cite{babuska1981p}) instead of Fourier series in the $z$ direction. In particular, consider the cylinder $Γ = [-1,1] \times Ω$.  We use  the basis
\[
\set{(1-z^2) C_k^{(3/2)}(z) \rmw_{mj}(x,y) : m \in \bbZ, k ≥ 0, j ≥ 0}
\]
which vanishes on the boundary of the cylinder as a basis for $\Ho^1$, where $C_k^{(3/2)}(z) := {2 \over k+2} P_k^{(1,1)}(z)$, see \cite[18.7.1]{DLMF}. A natural basis  for $\Hocurl$ is then
\begin{align*}
&\set{(1-z^2) C_k^{(3/2)}(z)  \Vectt[{\bfn_{mj}^+(x,y)}, 0] : m \in \bbZ, k ≥ 0, j ≥ 1  } \\
 &\qquad\cup \set{(1-z^2) C_k^{(3/2)}(z) \Vectt[{\bfn_{mj}^-(x,y)}, 0] : m \in \bbZ, k ≥ 0 , j ≥ \max(0,1-|m|) }  \\
&\qquad\cup \set{P_k(z) \Vectt[𝟎,{\rmw_{mj}(x,y)}] : m \in \bbZ, k ≥ 0, j ≥ 0},
\end{align*}
which is normal at the boundary of the cylinder,
for the Legendre polynomials $P_n(z) = C_n^{(1/2)}(z) =  P_n^{(0,0)}(z)$.
Since, \cite[18.9.20]{DLMF},
\[
{ \D \over \dz}[(1-z^2) C_n^{(3/2)}(z)] =  -(n+1)(n+2) P_{n+1}(z)
\]
we have
\begin{align*}
\nabla &(1-z^2) C_k^{(3/2)}(z) \rmw_{mj}(x,y) \\
&= -(j+1)(1-z^2) C_k^{(3/2)}(z)  \Vectt[{\bfn_{m,j+1}^+(x,y)}, 0] -(k+1)(k+2) P_k(z) \Vectt[𝟎,{\rmw_{mj}(x,y)}].
\end{align*}
A natural basis for $\Hodiv$ is
\begin{align*}
&\set{P_k(z)  \Vectt[{\bft_{mj}^+(x,y)}, 0] : m \in \bbZ, k ≥ 0, j ≥ 1  } \\
 &\qquad\cup \set{P_k(z) \Vectt[{\bft_{mj}^-(x,y)}, 0] : m \in \bbZ, k ≥ 0 , j ≥ \max(0,1-|m|) }  \\
&\qquad\cup \set{(1-z^2) C_k^{(3/2)}(z) \Vectt[𝟎,{\rmz_{mj}(x,y)}] : m \in \bbZ, k ≥ 0, j ≥ 0},
\end{align*}
which are tangential on the boundary of a cylinder. We find that
\begin{align*}
\curl & (1-z^2)  C_k^{(3/2)}(z) \Vectt[{\bfn_{mj}^+(x,y)}, 0]
= -(k+1)(k+2) P_{k+1}(z) \Vectt[{\bft_{mj}^+(x,y)},0]\\
\curl & (1-z^2)  C_k^{(3/2)}(z) \Vectt[{\bfn_{mj}^-(x,y)}, 0]
= -(k+1)(k+2) P_{k+1}(z) \Vectt[{\bft_{mj}^-(x,y)},0]\\ & \qquad -
κ_{mj}(1-z^2) C_k^{(3/2)}(z)\Vectt[𝟎,{\rmz_{mj}(x,y)}],\\
  \curl& P_k(z) \Vectt[𝟎,{\rmw_{m,j-1}(x,y)}] = j P_k(z)
  \Vectt[ {\bft_{mj}^+(x,y)},0].
\end{align*}
Finally, a natural basis for $L^2$ is
\[
\set{P_k(z) \rmz_{mj}(x,y) : m \in \bbZ, k,j ≥ 0}
\] 
and indeed
\meeq{
\divergence P_k(z) \Vectt[{\bft_{mj}^+(x,y)}, 0] = 0,\ccr
\divergence P_k(z) \Vectt[{\bft_{mj}^-(x,y)}, 0] = κ_{mj} P_k(z)  \rmz_{mj}(x,y), \ccr
\divergence (1-z^2) C_k^{(3/2)}(z) \Vectt[𝟎,{\rmz_{mj}(x,y)}] = -(k+1)(k+2) P_{k+1}(z) \rmz_{mj}(x,y).
}

\subsubsection{Exactness of the finite cylinder de Rham complex with boundary conditions.}

For a finite cylinder the Betti numbers are $b_0 = 1$ whilst all other $b_k$ are zero. Thus like the disk we only have constants not forming part of an exact complex. We can decompose the de Rham complex into  exact sub-complexes are now of the form, for all $m$ and $k,j ≥ 0$:
\begin{align*}
0 &\rightarrow  \mathspan\!\set{(1-z^2)C_k^{(3/2)}(z) \rmw_{mj}(x,y)} \overset{\nabla}{\rightarrow}   \mathspan\!\set{\begin{matrix} (1-z^2) C_k(z) \Vectt[{\bfn_{m,j+1}^+(x,y)},0]  \\ (1-z^2) C_k(z) \Vectt[{\bfn_{m,j+1}^-(x,y)},0]\\
P_k(z) \Vectt[𝟎,{\rmw_{mj}(x,y)}]
\end{matrix}}  \\
& \overset{\curl}{\rightarrow}
\mathspan\!\set{\begin{matrix} P_{k+1}(z) \Vectt[{\bft_{m,j+1}^+(x,y)},0]  \\ P_{k+1}(z)  \Vectt[{\bft_{m,j+1}^-(x,y)},0]\\
(1-z^2) C_k^{(3/2)}(z)  \Vectt[𝟎,{\rmz_{m,j+1}(x,y)}]
\end{matrix}}  \overset{\divergence}{\rightarrow}
\mathspan\!\set{P_{k+1}(z) \rmz_{m,j+1}(x,y)} \rightarrow 0,
\end{align*}
We also have additional exact sub-complexes for $m ≠ 0$ and $ k ≥ 0$
\begin{align*}
0 &\rightarrow   0 \overset{\nabla}{\rightarrow}  \mathspan\!\set{(1-z^2) C_k(z)  \Vectt[ {\bfn_{m0}^-(x,y)},0]}  \\
& \overset{\curl}{\rightarrow}  \mathspan\!\set{\begin{matrix} P_{k+1}(z) \Vectt[{\bft_{m0}^-(x,y)},0] \\
(1-z^2)C_k(z) \Vectt[𝟎,{\rmz_{m0}(x,y)}]
\end{matrix}}
\overset{\divergence}{\rightarrow}
\mathspan\!\set{P_{k+1}(z) \rmz_{m0}(x,y)} \rightarrow 0
\end{align*}
Finally, we have two more additional exact sub-complex for $m ≠ 0$ and $j ≥ 0$:
\begin{align*}
0 \rightarrow   0 \overset{\nabla}{\rightarrow}  0  \overset{\curl}{\rightarrow} & \mathspan\!\set{\begin{matrix} \Vectt[{\bft_{m0}^-(x,y)},0]
\end{matrix}}
\overset{\divergence}{\rightarrow}
\mathspan\!\set{\rmz_{m0}(x,y)} \rightarrow 0, \\
0 \rightarrow   0 \overset{\nabla}{\rightarrow}  0  \overset{\curl}{\rightarrow} & \mathspan\!\set{\begin{matrix} (1-z^2) C_k^{(3/2)}(z)\Vectt[𝟎,{\rmz_{0j}(x,y) }]
\end{matrix}}
\overset{\divergence}{\rightarrow}
\mathspan\!\set{P_{k+1}(z) \rmz_{0j}(x,y)} \rightarrow 0.
\end{align*}
As predicted by the Betti numbers, we are left with the constants $\mathspan\!\set{P_0 \rmz_{00}} \subset L^2$,  which are not in the range of $\divergence$ and thus do not form part of an exact complex.

\Section{conc} Future directions.

We have constructed a new basis for vector polynomials that are normal at the boundary of a disk, which have simple recurrence relationships for the gradient and curl, corresponding to the 2D de Rham complex with boundary conditions. These immediately give a basis  for the 3D de Rham complex with boundary conditions on both a periodic and finite cylinder.
These results have applications in the numerical solution of PDEs in disks and cylinders a la the Finite Element Exterior Calculus (see, e.g., \cite{Arnold2018}), and will, for example,  lead to optimal complexity solvers for Maxwell-like equations in periodic cylinders, including equivariant variable coefficients.  We note that the mass matrices can be deduced from the recurrences in \lmref{NormalRaising}, which can be used to relate $\bfn_{mj}^ν$ to the Zernike vector polynomials $\bfz_{mj}^{(0),ν}$ that are orthogonal with respect to $L^2$.

A straightforward extension is to dropping the restriction on boundary conditions, which is necessary for incorporation into an $hp$-FEM framework \ala\ \cite{papadopoulos2025sparse}.  This would require adding the harmonic polynomials to $\rmw_{mj}$ to form a complete basis of  polynomials and defining $\calK$ in \lmref{normalpolyfirst} in terms of a suitable basis so that the recurrence relationships remain simple.  Extension to annuli is a more challenging endeavour. A scalar basis was used successfully for solving PDEs \cite{papadopoulos2024building,papadopoulos2025sparse} including in an $hp$-FEM framework, built on orthogonal polynomials with respect to the weight $(1-r^2)(r^2-ρ^2)$. The  construction in this paper was designed with extensions to the de Rham complex in annuli (and cylinderical annuli) in mind, however, it is unclear what  the analogue of $N(x,y)$ is on an annulus. It is possible one would need to view an analogue of $\bfn_{mj}^ν$ in terms of biorthogonality properties, \ala\ recent work on bases for $\Hcurl$ in hypercubes and simplices \cite{haubold2024high}.

Another possible extension is to higher-dimensional balls, in particular in 3D, which requires spherical harmonics instead of Fourier modes, involving matrix versions of symmetry-adapted bases.
On the surface of the sphere sparse recurrence relationships relating the surface gradient of spherical harmonics to spin weighted spherical harmonics has been used effectively in the numerical solution of PDEs \cite{vasil2019tensor}. What remains open is a similar construction on spherical caps, allowing for surface gradients and curls in the vein of \cite{snowball2021sparse}, though this would require working with tangent spaces. 

Symmetry-adapted bases can be used to construct equivariant bases on tensor product domains where the group action is applied simultaneously to each variable. For example, for $\bfx, \bfy \in Ω$ consider the matrix-valued polynomial
\[
N_{mkj}(\bfx,\bfy) := \bfn_{m,k}(\bfx) \bfn_{-m,j}(\bfy)^\top.
\]
This function is equivariant (similar to the sense of \defref{matrixequivariance}) when we apply the rotation to each variable:
\[
N_{mkj}(ρ(φ) \bfx, ρ(φ) \bfy)  ρ(φ)  = ρ(φ) \bfn_{m,k}(\bfx) \E^{\I m φ} \E^{-\I m φ} \bfn_{-m,j}(ρ(φ) \bfy)^\top  ρ(φ)^\top  ρ(φ)  =  ρ(φ) N_{mkj}(\bfx, \bfy).
\]
Thus we can use the orthogonal polynomials introduced here to construct equivariant orthogonal polynomials on such product domains.
Similar constructions in the 3D case, where the irreducible representations are given in terms of spherical harmonics, but not necessarily involving orthogonal polynomials, have proven effective in the Atomic Cluster Expansion in quantum chemistry \cite{bartok2013representing,grisafi2018symmetry,drautz2020atomic,dusson2022atomic,bigi2022smooth}.

We focussed on the de Rham complex but our construction is also applicable to the Koszul complex, see \cite[Section 7.2]{Arnold2018}, which in 2D has the form of multiplication by $\Vectt[x,y]$ and $\vectt[-y,x]$. In particular, an integration-by-parts argument shows that there exist constants such that
\meeq{
\Vectt[x,y] \rmz_{mj} = \hbox{const.} \bfn_{m,j+1}^1 +  \hbox{const.} \bfn_{m,j+1}^2 + \hbox{const.} \bfn_{m,j}^1 +  \hbox{const.} \bfn_{m,j}^2, \ccr
\vect[-y,x] \bfn_{mj} = \hbox{const.} \rmw_{m,j-1} + \hbox{const.} \rmw_{m,j-2}.
}
Matching leading order terms would likely give explicit expressions for these constants.

Also of interest is extension to the elasticity complex, cf. \cite[Section 8.8]{Arnold2018}, which would enable the efficient numerical solution of elasticity or Stokes flow in a cylinder. Note that whilst the approach of \cite{vasil2016tensor} works well for Stokes (and Navier–Stokes) in a cylinder, putting everything in the setting of the elasticity complex may facilitate discretisation of more general tensor fields. Other approaches are effective for Navier–Stokes in 2D \cite{torres1999pseudospectral,wilber2017computing,slomka2018stokes} but are based on a stream/vorticity formulation that breaks down in 3D.  More generally, there is recent work on a 2-complex for
matrix fields \cite{gopalakrishnan20252}, and it may be possible to construct matrix orthogonal polynomials in a disk corresponding to these matrix Sobolev spaces in a way that leads to simple recurrence relationships.

\bibliography{DiskRepOPs}

\ends